\documentclass[12pt]{amsart}
\usepackage{amscd}
\usepackage{amssymb}
\usepackage{a4wide}
\usepackage{amstext}
\usepackage{amsthm}
\usepackage{mathrsfs}
\usepackage[final]{hyperref}
\hypersetup{unicode= false, colorlinks=true, linkcolor=blue,
anchorcolor=blue, citecolor=green, filecolor=red, menucolor=blue,
pagecolor=blue, urlcolor=blue} \linespread{1.2}
\numberwithin{equation}{section}

\newcommand{\CC}{\mathbb {C}}

\renewcommand{\phi}{\varphi}

\newcommand{\ima}{{\rm Im}\,}

\newcommand{\co}{\mathbb{C}}

\newcommand{\zl}{\mathbb{Z}}
\newcommand{\zz}{\mathcal{Z}}
\newcommand{\cp}{\mathbb{C^+}}

\newcommand{\rl}{\mathbb{R}}

\newcommand{\fw}{\mathcal{F}_W}
\newcommand{\fa}{\mathcal{F}_\alpha}
\newcommand{\hh}{\mathcal{H}}

\newcommand{\pp}{\mathcal{P}}

\newcommand{\MM}{\mathcal{M}}
\newcommand{\HH}{\mathcal{H}}

\newcommand{\Ordo}{\mathrm{O}}
\newcommand{\ordo}{\mathrm{o}}
\newcommand{\e}{\mathrm{e}}
\newcommand{\imag}{\mathrm{i}}
\newcommand{\Lip}{\mathrm{Lip}}

\newtheorem{Thm}{Theorem}[section]
\newtheorem{theorem}[Thm]{Theorem}
\newtheorem{lemma}[Thm]{Lemma}
\newtheorem{proposition}[Thm]{Proposition}
\newtheorem{corollary}[Thm]{Corollary}
\newtheorem{remark}[Thm]{Remark}
\newtheorem{example}[Thm]{Example}

\textheight=22.5truecm

\begin{document}
\sloppy
\title[Backward shift and nearly invariant subspaces of Fock-type spaces]
{Backward shift and nearly invariant subspaces \\ of Fock-type spaces}
\author{Alexandru Aleman, Anton Baranov, Yurii Belov, H{aa}kan Hedenmalm}

\address{Alexandru Aleman
\newline  Centre for Mathematical Sciences, Lund University, Sweden
\newline {\tt alexandru.aleman@math.lu.se}
\smallskip
\newline \phantom{x}\,\, Anton Baranov
\newline Department of Mathematics and Mechanics, St.~Petersburg State University,
St.~Petersburg, Russia,
\newline {\tt anton.d.baranov@gmail.com}
\smallskip
\newline \phantom{x}\,\, Yurii Belov
\newline  Department of Mathematics and Computer Science, St.~Petersburg State
University, St. Petersburg, Russia,
\newline {\tt j\_b\_juri\_belov@mail.ru}
\smallskip
\newline \phantom{x}\,\, H{aa}kan Hedenmalm
\newline  Department of Mathematics, KTH Royal Institute of Technology,
Stockholm, Sweden,
\newline {haakanh@math.kth.se}
}

\thanks{The work is supported by the Russian Science Foundation grant 19-11-00058}

\begin{abstract}
We study the structure of the  backward shift invariant and nearly invariant
subspaces in weighted Fock-type spaces $\fw^p$,  whose weight is  not necessarily
radial. We show that in the spaces 
$\fw^p$ which contain the polynomials as a dense subspace (in particular, in the
radial case) all nontrivial backward shift invariant subspaces are of the form
$\mathcal{P}_n$, i.e., finite dimensional subspaces consisting of polynomials
of degree at most $n$. In general, the structure of the nearly invariant subspaces
is more complicated. In the case of spaces of slow growth (up to zero exponential
type) we establish an analogue of de Branges' Ordering Theorem. We then construct
examples which show that the result fails for general Fock-type spaces of larger
growth.
\end{abstract}

\maketitle

\section{Introduction}
\label{int}

\subsection{Backward shift invariant and nearly invariant subspaces}  
Shift invariant and nearly invariant subspaces form an important part of theory of
spaces of analytic functions.
The basic setting here is the Hardy space $H^2$ where the shift invariant
subspaces are described by the famous Beurling
theorem, while nearly invariant subspaces were studied in detail by Hayashi,
Hitt and Sarason \cite{hay, hitt, sar}. In the case of other 
classical spaces in the disc (Bergman, Dirichlet, etc.) the structure of the
shift and backward shift invariant subspaces is much more complicated and a
complete description seems to be out of reach
(see, e.g., \cite{risu, ar, arr}).

Now we recall the necessary definitions. Let $\Omega$ be a domain in $\mathbb{C}$
with $0\in \Omega$, and let $\HH$ be a Banach space of functions analytic
in $\Omega$ such that point evaluation functionals $f\mapsto f(w)$ are bounded
for any $w\in \Omega$. In what folllows we always assume that 
the space $\hh$ has the {\it division property}, that is,
$\frac{f(z)}{z-w} \in \hh$ whenever $f\in \hh$, $w\in \Omega$ and $f(w) = 0$.
We say that a closed linear subspace $\MM$ of $\HH$ is backward shift invariant
if $\frac{f(z) - f(0)}{z} \in \MM$ for any $f\in \MM$. In other words, $\MM$
is invariant
for the backward shift $L: f\mapsto \frac{f - f(0)}{z}$.

There exists the more general notion of a nearly invariant subspace.  A closed
linear subspace $\MM$ of $\hh$ is said to be {\it nearly invariant} if
$\frac{f(z)}{z} \in \MM$ whenever $f\in \MM$ and $f(0) =0 $. Clearly, if
the function which is identically equal to $1$ belongs to $\MM$,
then nearly invariance is equivalent to backward shift invariance. In general, 
$\MM$ is backward shift invariant if and only if $\MM$ is nearly invariant
and $1 \in \MM +z \MM$
(i.e., $1= f +z g$ for some $f,g\in \MM$). Naturally, this condition rules out
the possibility that there would exist a sequence tending to infinity such that
all elements of a backward shift invariant subspace
$\MM$ would decay like $\ordo(|z|^{-1})$ along that sequence.

The choice of the point $0$ is not essential. For any $w\in \Omega$
such that there exists $f\in \MM$ with $f(w)\ne 0$
(i.e., $w$ is not a {\it common zero for $\MM$}), one has the implications
$$
\MM \quad \text{is backward shift invariant,} \quad 
f\in \MM, \ \ f(w) = 0 \ \Longrightarrow \ 
\frac{f(z)-f(w)}{z-w}\in \MM,
$$
and
$$
\MM \quad \text{is nearly invariant,} \quad 
f\in \MM, \ \ f(w) = 0 \ \Longrightarrow \ 
\frac{f(z)}{z-w}\in \MM.
$$
In the context of Hardy spaces in general domains the equivalence
of near invariance and division invariance is shown 
in \cite[Proposition 5.1]{ar}; a similar argument works 
for general Banach spaces of analytic functions \cite[Proposition 7.1]{by1}.

While backward shift invariant subspaces never have common zeros, this might
happen for nearly invariant subspaces   
of a space of analytic functions with the division property. Throughout in
this paper we shall consider 
nearly invariant subspaces {\it without common zeros.}

Let us make the following simple observations. If our space $\HH$ contains the set
$\pp$ of all polynomials, then any subspace of the form $\pp_n$ (consisting of
all polynomials of degree at most $n$) is a backward shift invariant subspace. 
As we will see,  for a class of weighted Fock-type spaces it is possible that
all nontrivial backward shift invariant subspaces and even all nearly-invariant
subspaces are of the form $\pp_n$. Note that in this case all nearly-invariant 
subspaces are ordered by inclusion (recall that an operator whose invariant
subspaces are ordered by inclusion is said to be {\it unicellular}).

It is however possible that nearly invariant subspaces have ordered structure
even in the case when they are not of the form $\pp_n$.
A model situation where this property holds is given by the Ordering Theorem
for de Branges spaces \cite{br, rom} (for a generalization to the
so-called Cauchy--de Branges spaces, see \cite{abb}). In the present paper
we study the structure of backward shift invariant
and nearly invariant subspaces for weighted Fock-type spaces of entire functions.
\medskip

\subsection{Weighted Fock-type spaces}
By a {\it weight}  we simply mean a positive  function $W$ in $\co$ which
is measurable with respect to planar Lebesgue measure $m_2$ in the complex
plane $\CC$.
With any weight $W$ and $p\in [1, +\infty)$ we associate the Fock-type space
of entire functions
\[
\fw^p = \bigg\{ F \in Hol(\co): \|F\|^p_{\fw^p} = \int_\co |F(z)|^p W(z) dm_2(z)
<+\infty  \bigg\}.
\]
In what follows we shall always assume that $W$ is bounded from above and
below by positive constants on any compact. In this case, the point
evaluation functionals are bounded on $\fw^p$ 
and $\fw^p$ is a Banach space with the division property. Conversely, it is
also easy to see that if  point evaluation functionals are bounded on $\fw^p$,
we can find a weight $\tilde{W}$ which is bounded above and below by positive
constants on any compact, and $\mathcal{F}_{\tilde{W}}^p=\fw^p$ with
equivalent norms.

In the case $p=2$ we will omit the exponent $p$.
Clearly, $\fw$ is a reproducing kernel Hilbert space. 
 
A typical example of a Fock-type space is a radial space $\fw^p$ with
$W(z) = \exp(-\varphi(|z|))$ 
where $\varphi$ is a function on $[0, +\infty)$ such that
$\log r = \ordo(\varphi(r))$, $r\to+\infty$ 
(to exclude the trivial finite-dimensional case). 
E.g., one can take $\varphi(r) = a r^\alpha$, $a, \alpha>0$; in what follows
we denote any corresponding space by $\fa^p$ (formally, it depends also on
$a$, but this dependence is not essential). The case $p=2$ and $\alpha=2$
corresponds to the classical Bargmann--Segal--Fock space,  ubiquitous in
applications -- from theoretical physics to time-frequency analysis.
The Hilbert spaces in this scale will be denoted by $\mathcal{F}_\alpha$.

In the present paper we will also consider non-radial Fock spaces.
It should be mentioned that Fock-type spaces are fairly general objects
which cover even seemingly unrelated examples.  
It was shown in \cite{bom} that any de Branges space (whose norm is given
by a weighted $L^2$-integral over the real axis) coincides with equivalence
of norms with some Fock-type space.
E.g., the Paley-Wiener space $PW_{[-a,a]}$, the image of $L^2([-a,a])$
by the Fourier transform,  coincides with the space $\fw$ where 
$W(z) = (1 +|\ima z|)^{-2}\e^{-2a|\ima z|}$ and, moreover, the Paley--Wiener space
is the only de Branges space which can be realized as a Fock-type space with the 
weight depending on $\ima z$ only. The same is true for a wider class of the
so-called Cauchy--de Branges spaces (for their theory see \cite{abb}): any 
Cauchy--de Branges can be realized as a Fock-type space. 

Note also that in the radial Hilbertian Fock spaces the monomials $\{z^n\}_{n\ge 0}$ 
form an orthogonal basis. Therefore, one can isometrically identify such spaces with
weighted spaces of sequences:
\[
\fw = \bigg\{ F(z) = \sum_{n\ge 0} c_n z^n: \ \|F\|^2 = 
\sum_{n\ge 0} W_n |c_n|^2< +\infty \bigg\}, \qquad W_n = 2\pi \int_0^\infty r^{2n+1} W(r)dr.
\]
\medskip

\subsection{Main results}
Given a Fock-type space $\fw^p$, we address the following questions: 
\medskip

a) When are all nontrivial backward shift invariant or nearly invariant subspaces  
of the form $\pp_n$ (recall that we consider only 
subspaces without common zeros)?
\medskip

b) When is the set of all nearly invariant subspaces  totally ordered by inclusion, 
i.e., given two nearly invariant subspaces $\MM_1$ and $\MM_2$ of
$\fw^p$, is it true that either $\MM_1\subset \MM_2$ or $\MM_2\subset \MM_1$?
\medskip

For backward shift invariant subspaces in radial Hilbertian Fock
spaces the answer to the first question is positive and essentially known. 
In this case the problem is equivalent to the unicellularity of the weighted shifts on the space $\ell^2$
studied, e.g., in \cite{nik, shi, ya1, ya2, dom}. 
One of the strongest results in this direction is due to D.V.~Yakubovich 
\cite{ya1, ya2} who showed (answering a question of A.L. Shields \cite{shi})
that if $(\lambda_n)_{n\ge 1}$ is a positive non-increasing sequence tending to zero, 
then the weighted shift 
$$
T: \ell^p(\zl_+) \to \ell^p(\zl_+), \qquad (c_0, c_1, \dots) \mapsto (\lambda_1 c_1, \lambda_2 c_2, \dots)
$$
is unicellular for any $p\in (1,\infty)$, and so its invariant subspaces are of the form $\{(c_n)_{n\ge 0}: \ c_n = 0,
\ n>N\}$ for some $N$. 
From this it is clear that  $L$ is unicellular on any radial Hilbertian space $\fw$, hence
all backward shift invariant subspaces of $\fw $ are of the form $\pp_n$.  However, it is not clear
whether one can relate the backward shift on $\fw^p$ with weighted shifts on sequence spaces
when $p\ne 2$.

Our first main result extends the above to a large class of  Fock-type spaces
$\fw^p$ ( $W$ not necessarily radial),  
containing  the set of polynomials  as a  dense subset. To state it we
introduce the following terminology. 
We say that $\fw^p$ is a space of finite order if any function $F\in\fw^p$
is of finite order. In fact, 
in this case there exists a uniform upper bound for the orders of elements
of $\fw^p$.
Analogously, we say that $\fw^p$ is a space of zero exponential type, if any
$F\in\fw^p$ 
is of zero type with respect to the order 1.

\begin{theorem} 
\label{main1}
Let $\fw^p$ be a space of finite order such that $\fw^p$ contains the set
$\pp$ of all 
polynomials as a dense subset.
Then any nontrivial backward shift invariant subspace is of the form
$\pp_n$ for some $n\in\zl_+$ and, thus, 
$L$ is unicellular on $\fw^p$.
\end{theorem}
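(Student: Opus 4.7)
My plan is to reduce the problem to two steps: (i) show that any nontrivial backward shift invariant subspace $\MM$ consists entirely of polynomials, and (ii) deduce that such a polynomial subspace must equal $\pp_n$ for some $n\ge 0$. Step (ii) is routine: if $\MM \subseteq \pp$, then the maximum degree $n$ of polynomials in $\MM$ must be finite (otherwise $\MM \supseteq \pp$ would force $\MM = \fw^p$ by polynomial density, contradicting nontriviality); taking $f \in \MM$ of degree exactly $n$, the iterates $L^k f$ for $k=0,\ldots,n$ are polynomials of strictly decreasing degrees $n, n-1, \ldots, 0$ with nonzero leading coefficients, so their linear span is $\pp_n \subseteq \MM$, with the reverse inclusion forced by the maximality of $n$.

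For step (i), I would argue by contradiction. Suppose $f = \sum_{k \ge 0} a_k z^k \in \MM$ is not a polynomial, so $a_k \neq 0$ for infinitely many $k$. For such $k$, set $g_k = L^k f / a_k \in \MM$, so that $g_k(z) = 1 + \sum_{j \ge 1}(a_{k+j}/a_k) z^j$. The finite-order hypothesis on $\fw^p$ ensures both that Taylor coefficients of any $\fw^p$-function decay rapidly (controlled by $\|z^k\|_{\fw^p}^{-1}$ via the coefficient functionals) and that the monomial norms $\|z^k\|_{\fw^p}$ grow at a controlled rate. Combining these estimates, I expect that along a suitable subsequence $k_\ell \to \infty$ one has $g_{k_\ell} \to 1$ in the $\fw^p$ norm. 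Since $\MM$ is closed, this gives $1 \in \MM$. Iterating by considering $(g_k - 1)\cdot a_k/a_{k+1}$, whose leading term is $z$, yields $z \in \MM$ in the limit, and continuing with $(g_k - 1 - (a_{k+1}/a_k)z)\cdot a_k/a_{k+2}$, and so on, gives $z^m \in \MM$ for every $m \ge 0$. Hence $\pp \subseteq \MM$, so $\MM = \fw^p$ by polynomial density: contradiction.

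The main obstacle is establishing the norm convergence $g_{k_\ell} \to 1$. Pointwise convergence on compact sets follows easily from the decay of the $(a_k)$, but promoting this to norm convergence requires quantitative control of the tail $\sum_{j \ge 1}(a_{k+j}/a_k)z^j$ in $\fw^p$. A delicate point is possibly irregular behavior of $(a_k)$: at exceptional $k$ where $|a_k|$ is anomalously small, $\|g_k\|$ could be anomalously large. The remedy is to restrict to a regular subsequence along which $|a_{k_\ell}|$ remains close to its expected decay rate, analogous to Levin-type regularization arguments for entire functions of finite order. The uniform upper bound on orders of elements of $\fw^p$ is precisely what permits such a regularization, and carrying out these estimates uniformly enough to pass to the limit in $\fw^p$ is the technical heart of the proof.
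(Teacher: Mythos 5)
Your overall strategy is genuinely different from the paper's (which is a duality argument: for an annihilating functional $\Psi_g$ one gets $f(z)\,\mathcal{C}_{g\mu}(z)=\mathcal{C}_{fg\mu}(z)$, then asymptotics of Cauchy transforms off a set of zero planar density give $|f(z)|=\Ordo(|z|^n)$ there, and the finite-order hypothesis is used through the zero-density Liouville-type theorem). Unfortunately, your version has a genuine gap at exactly the point you flag as the ``technical heart'': the claim that along some subsequence the normalized iterates $g_{k_\ell}=L^{k_\ell}f/a_{k_\ell}$ converge to $1$ \emph{in the $\fw^p$-norm} is neither proved nor reducible to known results. It is strictly stronger than the theorem itself (the theorem only yields that $1$ lies in the closed span of $\{L^kf\}$, not that it is a limit of single normalized iterates), so it cannot be waved through by analogy with unicellularity of weighted shifts; indeed the known unicellularity theorems (Nikolskii, Yakubovich, Domar) need monotonicity or regularity of the weight sequence, and your ``regular subsequence / Levin-type regularization'' remark is not an argument. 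Moreover, the only way to estimate $\bignorm{\sum_{j\ge1}(a_{k+j}/a_k)z^j}_{\fw^p}$ coefficientwise is to have the monomials behave like an (unconditional) basis with norms $\norm{z^j}$ controlling coefficient functionals; this is available in the radial Hilbert case, but Theorem \ref{main1} allows non-radial $W$ and all $p\in[1,+\infty)$, where no such structure exists (the paper explicitly remarks that it is unclear how to relate $L$ on $\fw^p$ to weighted shifts on sequence spaces for $p\ne2$). In particular, the bound $|a_k|\lesssim\norm{z^k}^{-1}\norm{f}$ you invoke is not valid in this generality. Notice also that your argument never really uses the finite-order hypothesis in a precise way, whereas in the paper it is indispensable (Theorem \ref{dens} fails without it); this is a sign that the estimates you hope for cannot be ``soft.''

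A secondary, fixable but currently open, issue is the bootstrap: from $1\in\MM$ you pass to $z\in\MM$ via $(g_k-1)a_k/a_{k+1}=z\,g_{k+1}$, but the subsequence along which $g_k\to1$ need not work for $k+1$ (and $a_{k+1}$ may even vanish), and multiplication by $z$ is not bounded on $\fw^p$, so $g_{k+1}\to1$ would not immediately give $z\,g_{k+1}\to z$ anyway. By contrast, your Step (ii) (a backward shift invariant subspace consisting of polynomials of bounded degree equals some $\pp_n$) is fine and is implicitly the same finishing step as in the paper. To salvage your approach you would have to restrict to radial Hilbertian spaces and actually prove the subsequence claim (a summation argument using log-convex, rapidly growing monomial norms can be made to work there), but that would prove a strictly weaker statement than Theorem \ref{main1}; for the theorem as stated you need something like the paper's Cauchy-transform/zero-density mechanism.
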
 

The following theorem shows that we can omit the restriction that $\fw^p$ is of
finite order
if the weight is radial. Note that in this case,  polynomials are dense in
$\fw^p$ whenever they are contained in it  (see Proposition
\ref{den} below).

\begin{theorem} 
\label{main11}
Let $\fw^p$ be a radial Fock-type space, $1<p<+\infty$, containing all polynomials. 
Then any nontrivial backward shift invariant subspace is of the form
$\pp_n$ for some $n\in\zl_+$ and, thus, $L$ is unicellular on $\fw^p$.
\end{theorem}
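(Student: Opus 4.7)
My plan rests on the rotational symmetry of the radial weight together with the Schauder basis property of the monomials, which together reduce the problem to classifying rotation-invariant closed subspaces.

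First I would set up the analytic infrastructure. By Proposition \ref{den} the polynomials are dense in $\fw^p$, and every $F \in \fw^p$ has a Taylor expansion $F(z) = \sum_{n \ge 0} a_n(F)\, z^n$. For $1 < p < +\infty$ and radial $W$, the M.\ Riesz theorem gives a uniform $L^p(\mathbb{T})$-bound for the Taylor truncations $S_N F := \sum_{n \le N} a_n(F)\, z^n$ on every circle $|z|=r$; integrating against the radial weight $W(r)\, r\, dr$ transfers this bound to $\fw^p$. Hence $(z^n)_{n\ge0}$ is a Schauder basis of $\fw^p$, the coefficient functionals $\ell_n : F \mapsto a_n(F)$ are continuous, and the rotational averages
\[
P_n F(z) := \frac{1}{2\pi}\int_0^{2\pi} e^{-in\theta}\, F(e^{i\theta} z)\, d\theta = a_n(F)\, z^n
\]
are contractive projections of $\fw^p$ onto the line $\CC \cdot z^n$.

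Next, given a nontrivial closed backward shift invariant subspace $\MM$, I would form its rotation closure
\[
\widetilde{\MM} := \ospan\bigl\{F(e^{i\theta}\,\cdot\,) : F \in \MM,\ \theta \in \RR\bigr\}.
\]
Because rotations are isometries of $\fw^p$ (by radiality of $W$) and commute with $L$, $\widetilde{\MM}$ is a closed rotation-invariant backward shift invariant subspace. Combining the projections $P_n$ with the Schauder basis expansion one sees that $\widetilde{\MM} = \ospan\{z^n : n \in S\}$ with $S := \{n \in \Z_+ : \ell_n|_{\widetilde{\MM}} \ne 0\}$, and backward shift invariance forces $S$ to be downward-closed in $\Z_+$. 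Therefore $\widetilde{\MM}$ is either $\pp_N$ for some finite $N$ or all of $\fw^p$.

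If $\widetilde{\MM} = \pp_N$, then $\MM \subseteq \pp_N$ is finite-dimensional and $L$-invariant; since $L|_{\pp_N}$ is nilpotent with invariant subspace lattice $\{\pp_k\}_{k \le N}$, we conclude $\MM = \pp_k$ for some $k \le N$. The principal obstacle is excluding the remaining alternative $\widetilde{\MM} = \fw^p$ with $\MM \ne \fw^p$; equivalently, one must show that any $F \in \fw^p$ with infinitely many nonzero Taylor coefficients is $L$-cyclic. For this I would pass to the annihilator $\MM^\perp$ in the dual Fock-type space (on which $L^*$ acts as multiplication by $z$) and apply the rotational Fourier decomposition to $\MM^\perp$ to reduce matters to the unicellularity of a weighted forward shift whose weights $\omega_n := \|z^n\|_{\fw^p}$ satisfy $\omega_{n-1}/\omega_n$ nonincreasing and tending to $0$ (a consequence of radiality). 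A Yakubovich-type argument in the spirit of \cite{ya1, ya2}, adapted from the $\ell^p$ setting to the present Fock-type sequence space, then forces $\MM^\perp = \{0\}$, yielding the desired contradiction.
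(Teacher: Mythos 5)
Your first three steps are sound: for $1<p<+\infty$ the M.~Riesz theorem does make the monomials a Schauder basis of a radial $\fw^p$, the rotational averages $P_n$ are bounded projections landing in any closed rotation-invariant subspace, and the classification of rotation-invariant backward shift invariant subspaces (together with the nilpotent Jordan-block analysis when $\widetilde{\MM}=\pp_N$) is correct. This part is essentially the symmetrization argument the paper uses in Section \ref{rot} for \emph{rotation-invariant} subspaces. But that reduction does not touch the actual content of Theorem \ref{main11}: passing to the rotation closure $\widetilde{\MM}$ destroys exactly the information you need, and the remaining alternative ($\widetilde{\MM}=\fw^p$, $\MM\ne\fw^p$; equivalently, cyclicity of every $F$ with infinitely many nonzero Taylor coefficients) \emph{is} the theorem. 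For that case you offer only a sketch, and the sketch does not work as stated.

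Concretely: $\MM^\perp$ is not rotation invariant (since $\MM$ is not), so "applying the rotational Fourier decomposition to $\MM^\perp$" cannot reduce the problem to a weighted shift on a space of Taylor coefficients -- that decomposition only classifies rotation-invariant annihilators, i.e.\ it reproves the part you already have. Moreover, for $p\ne 2$ the dual of $\fw^p$ is not a Fock-type space of entire functions on which $L^*$ acts as multiplication by $z$ (the paper stresses that the dual cannot be identified with $\fw^q$), the monomials are not an unconditional basis of $\fw^p$, and consequently $\fw^p$ is not the weighted sequence space $\ell^p(\omega)$ to which Yakubovich's unicellularity theorem applies; the introduction of the paper explicitly names this as the obstruction ("it is not clear whether one can relate the backward shift on $\fw^p$ with weighted shifts on sequence spaces when $p\ne 2$"). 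Even the auxiliary claim that $\omega_{n-1}/\omega_n\to 0$ is asserted, not proved. The paper's route around all of this is quite different: it proves a refined Hahn--Banach representation (Proposition \ref{anal}), obtained by taking the extremal $h=\overline{f}_0|f_0|^{p-2}$ and applying the Riesz projection circle by circle, which gives a representing function $g$ whose Cauchy transform $\mathcal{C}_{g\mu}$ is antianalytic on each circle with the explicit formula \eqref{bnm1} and the $H^q$ bound \eqref{bnm2}; combining the identity $f\,\mathcal{C}_{g\mu}=\mathcal{C}_{fg\mu}$ with a lower bound for $U_r$ near the origin and the integrability estimate \eqref{ct} then shows directly that any $f$ annihilated together with its $T_\zeta$-orbit by a nonzero functional is a polynomial. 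To complete your proposal you would need to supply an argument of comparable strength for the cyclic case; as written, that step is a genuine gap.
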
 

We now turn to the ordering property for nearly invariant subspaces.
Here the  threshold is given by 
the order 1. Spaces of smaller order have the ordering property
while even in the standard radial 
Fock spaces of order 1, or higher,  nearly invariant subspaces are not
ordered by inclusion. 
The next theorem can be obtained  by a modification of the beautiful de Branges' 
proof of Ordering Theorem for de Branges spaces based
on a Phragm\'en--Lindel\"of type result due to M. Heins. 

\begin{theorem} 
\label{main3}
Let $\fw^p$ be a space of zero exponential type. 
Then the set of nearly invariant subspaces of $\fw^p$ is ordered by inclusion. 
\end{theorem}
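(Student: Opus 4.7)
The plan is to follow the structure of de Branges' proof of the ordering theorem for de Branges spaces, with the Hermite--Biehler machinery replaced by a direct Phragm\'en--Lindel\"of argument based on a classical theorem of Heins on subharmonic functions of minimal exponential type. I argue by contradiction: assume that there exist nearly invariant subspaces $\MM_1, \MM_2 \subset \fw^p$ (without common zeros) such that neither one contains the other, and pick $F_j \in \MM_j \setminus \MM_{3-j}$ for $j = 1, 2$.

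The first step is an extremal reduction of $F_1, F_2$. Near invariance lets us divide out any common zero of $F_1$ and $F_2$ while staying in the respective subspaces, so we may assume the zero sets of $F_1$ and $F_2$ are disjoint. Furthermore, since $\MM_2$ has no common zero, for each zero $w$ of $F_1$ there is $G_w \in \MM_2$ with $G_w(w) \neq 0$; a suitable linear combination $F_2 - \lambda G_w$ then lies in $\MM_2$ and vanishes at $w$, and by near invariance it can be divided by $z - w$. Iterating, one produces, for every finite subset $Z$ of the zero set of $F_1$, an element of $\MM_2$ vanishing precisely on $Z$ (with the prescribed multiplicities), and symmetrically for the roles reversed.

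The heart of the proof is the analysis of the meromorphic quotient $\phi = F_1/F_2$. Since $\fw^p$ is a space of zero exponential type, both $F_1$ and $F_2$ are entire of exponential type zero, so $\log|\phi|$ is a delta-subharmonic function of minimal type. The $\fw^p$-integrability of $F_j$, combined with the standard envelope-type bounds for weighted Fock-type spaces, yields upper estimates for $\log|F_j|$ on large regions of $\CC$. Heins' Phragm\'en--Lindel\"of theorem for subharmonic functions of minimal exponential type then forces $\phi$ to be a \emph{rational} function whose zeros and poles are confined to the zero sets of $F_1$ and $F_2$, respectively.

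Finally, once $\phi$ is known to be rational, one writes $F_1 \prod_j (z - b_j) = r(z)\, F_2 \prod_i (z - a_i)$, where the $a_i$ (resp.\ $b_j$) enumerate the zeros of $F_1$ (resp.\ $F_2$) and $r$ is a polynomial; both sides represent the same entire function. Combining this identity with the constructions of the first step --- namely, absorbing the factors $\prod_i(z-a_i)$ and $\prod_j(z-b_j)$ into specially chosen elements of $\MM_1$ and $\MM_2$ via iterated division --- one produces a nonzero element lying in both $\MM_1$ and $\MM_2$ and, after one further division, concludes $F_1 \in \MM_2$ (or $F_2 \in \MM_1$), contradicting the initial choice. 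The main obstacle I anticipate is the Heins step in the non-radial setting: extracting the pointwise envelope bounds from weighted $L^p(W\,dm_2)$-integrability, and verifying the minimal-type Phragm\'en--Lindel\"of hypothesis globally rather than only along a single ray, is delicate when $W$ lacks any symmetry, and will likely require a regional analysis restricting attention to sectors or strips on which $W$ is well-behaved.
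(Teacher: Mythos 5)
Your proposal breaks down at its central step, the application of Heins' theorem. Heins' result (as used in the paper, Theorem \ref{hei}) requires a pointwise bound on $\min(|f(z)|,|g(z)|)$ for a \emph{pair} of zero-type entire functions, and your argument never produces such a pair: you apply a Phragm\'en--Lindel\"of principle directly to $\phi = F_1/F_2$ with $F_j\in\MM_j$, but there is no bound whatsoever on $\min(|F_1|,|F_2|)$ (elements of a zero-exponential-type Fock space can grow arbitrarily fast below exponential), and zero exponential type alone certainly does not force a quotient of two such functions to be rational --- e.g.\ $F_1(z)=\cos\sqrt{z}$, $F_2\equiv 1$ both lie in suitable spaces $\fw^p$ of zero exponential type and their quotient is transcendental. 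More fundamentally, your argument only uses near invariance to divide out zeros inside each $\MM_j$, and uses nothing about the subspaces being \emph{closed} or about duality; no contradiction can come from such weak hypotheses. The paper's proof gets the Heins hypothesis from a completely different construction: one picks annihilating functionals $\Psi_{F_1}\perp\MM_2$, $\Psi_{F_2}\perp\MM_1$ and forms the auxiliary entire functions
$f(z)=\Psi_{F_1}\bigl(\frac{F-\frac{F(z)}{G(z)}G}{\zeta-z}\bigr)$,
$g(z)=\Psi_{F_2}\bigl(\frac{G-\frac{G(z)}{F(z)}F}{\zeta-z}\bigr)$;
near invariance of $\MM_2$ is what makes $f$ independent of the choice of $G\in\MM_2$ (hence entire of zero type), and the bound $\min(|f|,|g|)\lesssim|z|^3$ off a small exceptional set holds because at every point one of $|F/G|$, $|G/F|$ is at most $1$. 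The contradiction is then reached not through a factorization of $F_1/F_2$ but through the asymptotics \eqref{plan} of Cauchy transforms together with the Liouville-type Theorem \ref{dens}. None of this machinery has a counterpart in your sketch, and without it the "$\phi$ is rational'' claim is unsupported.

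Even granting rationality of $\phi$, your concluding step does not work. If $F_1$ has infinitely many zeros the identity $F_1\prod_j(z-b_j)=r(z)F_2\prod_i(z-a_i)$ is meaningless, and the iterated-division construction only handles finite zero sets, so a limiting argument (requiring norm control and closedness, which you do not address) would be needed. Moreover, nearly invariant subspaces are \emph{not} stable under multiplication by $z-a$ (already $\pp_n$ fails this), so "absorbing'' the polynomial factors to place $F_1$ in $\MM_2$ is not legitimate; and merely exhibiting a nonzero element of $\MM_1\cap\MM_2$ would not contradict the assumption that neither subspace contains the other.
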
 

In the case when polynomials are dense in $\fw^p$ this leads to a complete
description of all nearly invariant subspaces. 

\begin{corollary}
\label{cor1}
Let $\fw^p$ be a  Fock-type space of zero exponential type which
contains the polynomials as a dense subspace. Then any nontrivial nearly invariant 
subspace is of the form $\pp_n$ for some $n\in\zl_+$. 
This is true, in particular, for all spaces $\fa^p$, $\alpha\in (0,1)$.
\end{corollary}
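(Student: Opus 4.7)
The plan is to reduce to Theorem \ref{main3} and exploit the fact that every $\pp_n$ is itself (backward shift, hence nearly) invariant. Let $\MM$ be a nontrivial nearly invariant subspace of $\fw^p$. Applying the ordering theorem to the pairs $(\MM,\pp_n)$ for $n \in \zl_+$ yields a clean dichotomy: either (a) $\MM \subseteq \pp_n$ for some $n$, or (b) $\pp_n \subseteq \MM$ for every $n \in \zl_+$.

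Case (b) is immediate: $\pp = \bigcup_n \pp_n$ is then contained in $\MM$, and since $\pp$ is dense in $\fw^p$ and $\MM$ is closed, $\MM = \fw^p$, which is excluded by nontriviality.

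In case (a), $\MM$ is a finite-dimensional space of polynomials, without common zeros by our standing assumption. The division property stated in the introduction (valid for nearly invariant subspaces with no common zeros) gives $f \in \MM$, $f(w)=0 \Rightarrow f(z)/(z-w) \in \MM$ for every $w \in \CC$. Setting $k := \max\{\deg f : f \in \MM \setminus \{0\}\} \le n$, picking $p \in \MM$ of degree exactly $k$, and factoring $p(z)=c\prod_{j=1}^{k}(z-w_j)$, I would iteratively divide out the linear factors to produce polynomials in $\MM$ of every degree $0,1,\ldots,k$. Being of pairwise distinct degrees, these are linearly independent, so $\dim \MM \ge k+1 = \dim \pp_k$; combined with $\MM \subseteq \pp_k$ this forces $\MM = \pp_k$.

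It remains to verify that the spaces $\fa^p$ with $\alpha \in (0,1)$ satisfy the hypotheses. The pointwise bound arising from subharmonic averaging of $|F|^p$ against the weight $\e^{-a|z|^\alpha}$ forces every $F \in \fa^p$ to have order at most $\alpha < 1$, hence zero exponential type. Density of polynomials in radial Fock-type spaces containing them is Proposition \ref{den}. So the hypotheses hold and the first part of the corollary applies. The main obstacle of the whole argument is concentrated entirely in Theorem \ref{main3}; once the ordering of nearly invariant subspaces is in hand, the reduction to the polynomial case is a short exercise in the division property, and no appeal to Theorem \ref{main1} is needed.
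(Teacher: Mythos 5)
Your proposal is correct and follows essentially the route the paper intends: Corollary \ref{cor1} is left as an immediate consequence of Theorem \ref{main3}, obtained exactly as you do by comparing $\MM$ with the nearly invariant subspaces $\pp_n$, using density of the polynomials when $\pp_n\subseteq\MM$ for all $n$, and near invariance without common zeros (the division property) to identify the finite-dimensional case with some $\pp_k$. Your verification that $\fa^p$, $\alpha\in(0,1)$, is of zero exponential type and has dense polynomials (via the subharmonic pointwise bound and Proposition \ref{den}) is likewise the standard one, so the argument is complete.
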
 
 
Obviously, in the case $\alpha\ge 1$, the spaces $\fa^p$  contain
finite-dimensional nearly invariant subspaces of the form $\e^{Q}\mathcal{P}_n$ 
where $Q$ is a fixed polynomial of degree at most $\alpha$ and one cannot
expect the ordered structure.
However, the collection of nearly invariant subspaces is much larger.  
The following result is a special case of a more general construction (see
Sections \ref{barg} and \ref{count}). 

\begin{theorem} 
\label{main4}
For any $\alpha\ge 1$ the space $\fa^p$ contains nontrivial
infinite-dimensional nearly invariant subspaces.
\end{theorem}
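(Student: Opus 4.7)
The plan is to produce the required subspace as the image of an auxiliary Fock-type space under an isometric multiplication operator $M_G \colon \mathcal{F}_{W'}^p \to \fa^p$, $f \mapsto Gf$, where $G$ is an entire function without zeros that is not a polynomial. If one finds such a $G$ together with an auxiliary weight $W'$ satisfying $|G(z)|^p \e^{-a|z|^\alpha} = W'(z)$ for which $\mathcal{F}_{W'}^p$ is infinite-dimensional, then $\MM := G \cdot \mathcal{F}_{W'}^p$ is automatically a closed, infinite-dimensional subspace of $\fa^p$ that escapes every $\pp_n$.

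The natural candidate is $G(z) := \e^{bz}$ for a real parameter $b>0$, which gives $W'(z) = \exp\bigl(pb\,\re z - a|z|^\alpha\bigr)$. For $\alpha > 1$ the term $-a|z|^\alpha$ dominates $pb\,\re z$ in every direction regardless of $b$, while for $\alpha = 1$ the same conclusion holds provided one chooses $b$ with $pb < a$, since then $pb\cos\theta - a < 0$ uniformly in $\theta$. In either case $W'$ is continuous, strictly positive, and tends to $0$ along every ray; hence $W'$ meets the standing hypothesis on weights, and $\mathcal{F}_{W'}^p$ is a genuine Fock-type space containing all polynomials, and in particular is infinite-dimensional.

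It remains only to verify the structural properties of $\MM$. Closedness and infinite-dimensionality follow because $M_G$ is an isometric embedding by construction (the identity $\|Gf\|_{\fa^p} = \|f\|_{\mathcal{F}_{W'}^p}$ is built into the definition of $W'$). Since $G = \e^{bz}$ is nowhere zero, $\MM$ has no common zeros. For near invariance, let $F = Gf \in \MM$ with $F(0)=0$; then $G(0) = 1$ forces $f(0) = 0$, and the division property of $\mathcal{F}_{W'}^p$ gives $f(z)/z \in \mathcal{F}_{W'}^p$, so $F(z)/z = G(z)\cdot f(z)/z \in \MM$. Finally, $G = G\cdot 1 \in \MM$ is not a polynomial, so $\MM$ cannot coincide with any $\pp_n$. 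The only point demanding real care is the tuning of $b$ in the borderline case $\alpha = 1$, where one must ensure that the exponential growth of $|G|^p$ in the right half-plane is strictly beaten by the decay of $\e^{-a|z|}$ in every direction; this is the single quantitative ingredient of the argument, the remainder being purely formal. For $\alpha > 1$ the same scheme works for a much richer family of multipliers, e.g. $G(z) = \e^{bz^k}$ with $1 \le k \le \alpha$, which anticipates the general construction of Sections \ref{barg} and \ref{count}.
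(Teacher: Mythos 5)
There is a fatal gap: the subspace you construct is the whole space, so it is not nontrivial. Since $G(z)=\e^{bz}$ has no zeros, \emph{every} $F\in\fa^p$ can be written as $F=Gf$ with $f=F\e^{-bz}$ entire, and with your choice $W'=|G|^pW_\alpha$ (where $W_\alpha(z)=\e^{-a|z|^\alpha}$) one has
\[
\int_\co |f|^p W'\,dm_2=\int_\co |F|^p|G|^{-p}\,|G|^pW_\alpha\,dm_2=\|F\|_{\fa^p}^p<+\infty ,
\]
so $f\in\mathcal{F}_{W'}^p$ and $F\in\MM$. In other words, $M_G\colon\mathcal{F}_{W'}^p\to\fa^p$ is not just an isometric embedding but a surjective isometry, and $\MM=G\cdot\mathcal{F}_{W'}^p=\fa^p$. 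Observing that $G\in\MM$ is not a polynomial only rules out $\MM=\pp_n$; it does not rule out $\MM=\fa^p$, which is exactly what happens. The scheme cannot be repaired within this framework: if instead you take a multiplier $G$ \emph{with} zeros (so that $G\cdot\mathcal{F}_{W'}^p$ becomes proper), the resulting subspace has common zeros at $\mathcal{Z}_G$, which is excluded by the paper's standing convention on nearly invariant subspaces. The real difficulty in Theorem \ref{main4} is precisely to produce a subspace that is simultaneously closed, without common zeros, infinite-dimensional, \emph{and proper}, and your argument never addresses properness.

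For comparison, the paper's constructions achieve properness through an asymmetry that your global multiplicative condition destroys: in Section \ref{barg} the subspaces $\mathcal{M}_a$ are cut out by a growth restriction $|f(z)P(z)^{-1}\e^{-z^\alpha/p}|\le C\e^{a|z|^{\alpha/2}}$ imposed only in the single angle $\{0<\arg z<2\pi/\alpha\}$; closedness is then a Phragm\'en--Lindel\"of argument (not automatic, since the defining condition involves a non-uniform constant $C$), nontriviality requires building a function with the prescribed one-sided behaviour via Yulmukhametov's atomization theorem, and properness follows because generic elements of $\fa^p$ violate the restriction in that angle. Alternative routes in the paper (the Bargmann transform images $\mathcal{B}L^2_I$ for $\mathcal{F}_2$, and the generating-function subspaces $\MM_G$ of Section \ref{count}, whose properness is proved by exhibiting an explicit annihilating functional) likewise rest on such an asymmetric or spectral restriction. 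Your proposal contains none of these ingredients, so it does not prove the theorem.
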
 

We will give several methods to construct nontrivial nearly invariant subspaces in
Fock-type spaces. In the classical Fock space $\mathcal{F}_2$ one can
give such examples using the Bargmann transform. In general spaces
$\fa$, $\alpha\ge 1$, one can define nearly invariant subspaces
imposing certain growth/decay conditions in some angles (see Section \ref{barg}). 

There exists a standard way to construct nearly invariant subspaces in a
Banach space $\mathcal{H}$ of entire functions with bounded point evaluations
and division property. 
Let $G \in \mathcal{H}$ have only simple zeros. Then it is easy to see that
\[
\mathcal{M}_G = \overline{{\rm Span}} \bigg\{ \frac{G(z)}{z-\lambda}: \
\lambda\in \mathcal{Z}_G \bigg\}
\]
is a nearly invariant subspace. Here $\mathcal{Z}_G$ is the zero set of $G$.
In the case when $G$ has multiple zeros, an obvious 
modification is required. Note that Corollary \ref{cor1} admits the
following reformulation in terms of approximation theory:
\begin{corollary}
\label{cor2}
If $\fw^p$ is a radial Fock-type space of zero exponential type, then 
the family $\big\{ \frac{G(z)}{z-\lambda}: \ \lambda\in \mathcal{Z}_G \big\}$
is complete in $\fw^p$ for any transcendental entire function  $G\in \fw^p$
with simple zeros.
\end{corollary}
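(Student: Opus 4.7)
The strategy is to apply Corollary \ref{cor1} directly to the candidate subspace
$$
\MM_G := \overline{\mathrm{Span}}\bigl\{G(z)/(z-\lambda) : \lambda \in \mathcal{Z}_G\bigr\} \subset \fw^p,
$$
since completeness of the family is precisely the assertion $\MM_G = \fw^p$. Each generator is entire because $\lambda$ is a simple zero, and lies in $\fw^p$ by the division property. The construction in the paragraph immediately preceding Corollary \ref{cor2} already identifies $\MM_G$ as a nearly invariant subspace. It has no common zero: any point $\mu \notin \mathcal{Z}_G$ satisfies $G(\mu)/(\mu-\lambda) \neq 0$ for every $\lambda \in \mathcal{Z}_G$ with $\lambda \neq \mu$, while each $\lambda_k \in \mathcal{Z}_G$ is nonzero for the specific generator $G(z)/(z-\lambda_k)$, whose value at $z = \lambda_k$ is $G'(\lambda_k) \neq 0$ by simplicity of the zero.

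The remaining structural check is that $\MM_G$ is infinite-dimensional. Here I would use the fact that a transcendental entire function $G$ of zero exponential type must have infinitely many zeros: Hadamard's factorization represents any such function with only finitely many zeros as $P(z) e^{az+b}$, and the zero-type hypothesis forces $a = 0$, making $G$ a polynomial and contradicting transcendence. Enumerating $\mathcal{Z}_G = \{\lambda_k\}_{k\geq 1}$, the generators $G(z)/(z-\lambda_k)$ are linearly independent because evaluation at $\lambda_j$ annihilates every one of them except the one with $k = j$. Hence $\dim \MM_G = \infty$.

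Now Corollary \ref{cor1} applies: under the stated hypotheses, every nontrivial nearly invariant subspace of $\fw^p$ (without common zeros) has the form $\pp_n$ for some $n$ and is therefore finite-dimensional. Since $\MM_G$ is nearly invariant, lacks common zeros, and is infinite-dimensional, the only remaining possibility is $\MM_G = \fw^p$, which is the desired completeness. The single point requiring verification is that the standing hypothesis of Corollary \ref{cor1} applies, namely that polynomials lie densely in our $\fw^p$. The existence of a transcendental element $G \in \fw^p$ together with rotation invariance of the radial norm isolates the Taylor coefficients of $G$ and forces $\pp \subset \fw^p$; density then follows from Proposition \ref{den}. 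This small verification is the only place I anticipate having to be careful; once it is in place, Corollary \ref{cor1} does all the work.
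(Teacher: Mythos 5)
Your proposal is correct and follows essentially the same route the paper intends: Corollary \ref{cor2} is just Corollary \ref{cor1} applied to the nearly invariant subspace $\mathcal{M}_G$, which has no common zeros and, containing transcendental elements (equivalently, being infinite-dimensional), cannot equal any $\pp_n$, hence must be all of $\fw^p$. Your explicit verification that the presence of a transcendental $G$ in a radial space forces all monomials into $\fw^p$ (so that Proposition \ref{den} yields the density hypothesis of Corollary \ref{cor1}) is a point the paper leaves implicit, and it is handled correctly.
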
 

An interesting aspect related to these examples is exploited in
Section \ref{count}, where we construct a function $G$ with an asymmetric
behaviour which will be then  inherited by all the non-zero elements of the
corresponding subspace $\MM_G$.  In fact, all examples in Sections
\ref{barg} and \ref{count} are essentially based on finding  subspaces
whose non-zero elements share a certain asymmetric behaviour. This
leads to the natural question whether there exist   nontrivial
infinite-dimensional  nearly invariant subspaces satisfying additional
symmetry conditions. In Section \ref{rot} we consider  rotation invariance
of such subspaces  in radial Fock-type spaces, that is nearly invariant
subspaces which are also invariant for the operator
$R_\beta f(z)  = f(\e^{i\beta} z)$. 

It is easy to show that if $\beta/\pi \notin \mathbb{Q}$, then 
any nontrivial nearly invariant subspace $\MM$ 
of a radial space $\fw$ with  $R_\beta \MM \subset \MM$ must contain
polynomials up to some order and,  thus, is of the form 
$\pp_k$ (see Section \ref{rot}). 
 
The case when $\beta = \pi m/n$ with relatively prime $m$ and $n$ reduces 
to the case when $\beta = 2\pi /n$, and  we have the following result. 

\begin{theorem}
\label{simetr}
Let $\fw^p$ be a radial Fock-type space and let $n\ge 2$ be an integer such
that any element of $\fw^p$ has zero type with respect to order $n$. 
Then any nontrivial nearly-invariant subspace $\MM$ invariant with
respect to $R_{2\pi /n}$ is of the form $\pp_m$.
\end{theorem}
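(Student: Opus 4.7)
The plan is to decompose $\MM$ along the eigenspaces of the rotation $R_{2\pi/n}$, reduce each component via the substitution $w = z^n$ to a nearly invariant subspace of a radial Fock-type space of zero exponential type, and apply Corollary~\ref{cor1} there. Since $W$ is radial, $R_{2\pi/n}$ is an isometry of $\fw^p$ with $R_{2\pi/n}^n = \id$, and the associated spectral projections $P_k = n^{-1}\sum_{j=0}^{n-1} e^{-2\pi i jk/n} R_{2\pi/n}^j$ give the decomposition $\fw^p = \bigoplus_{k=0}^{n-1} \EE_k$, where $\EE_k := \{f \in \fw^p : f(z) = z^k g(z^n),\ g \in \Hol(\CC)\}$; the $R_{2\pi/n}$-invariance of $\MM$ then yields $\MM = \bigoplus_{k=0}^{n-1} \MM_k$ with $\MM_k := \MM \cap \EE_k$, and the absence of common zeros evaluated at $z=0$ forces $\MM_0 \neq \{0\}$.

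Next, I would identify $\EE_k$ with a radial weighted space of entire functions $\widetilde{\EE}_k$ via the (up to a multiplicative constant) isometry $T_k : g \mapsto z^k g(z^n)$. A polar change of variables gives the formal weight $|w|^{(pk+2)/n - 2} W(|w|^{1/n})$, which may blow up at the origin but is locally bounded elsewhere and gives bounded point evaluations; by the remark following the definition of $\fw^p$ it may be replaced by an equivalent locally bounded weight, so $\widetilde{\EE}_k$ is a bona fide radial Fock-type space. The substitution $w = z^n$ translates the ``zero type with respect to order $n$'' hypothesis on $\fw^p$ into ``zero exponential type'' on $\widetilde{\EE}_k$. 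Setting $\widetilde{\MM}_k := T_k^{-1}(\MM_k)$, one verifies that $\widetilde{\MM}_k$ is nearly invariant in $\widetilde{\EE}_k$: if $g \in \widetilde{\MM}_k$ with $g(0) = 0$, write $g = w\tilde g$; then $T_k g = z^{k+n}\tilde g(z^n) \in \MM$ has a zero of order at least $n \geq 2$ at the origin, and $n$ successive applications of near invariance give $z^k \tilde g(z^n) = T_k g / z^n \in \MM_k$, i.e., $g/w \in \widetilde{\MM}_k$. A short argument (using that $\{k \geq 0 : z^k \in \fw^p\}$ is automatically an initial segment of $\Z_+$ for radial $W$, together with a Cauchy-type bound on Taylor coefficients combined with the zero-type hypothesis) shows that either all monomials lie in $\fw^p$ or $\fw^p$ is itself a finite-dimensional polynomial subspace; in either case polynomials are dense in each $\widetilde{\EE}_k$ by Proposition~\ref{den}, and Corollary~\ref{cor1} applies, giving $\widetilde{\MM}_k = \pp_{m_k}$ for some $m_k \in \{-1, 0, 1, \ldots\}$ (with the convention $\pp_{-1} := \{0\}$).

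Finally, I would link the $m_k$'s as follows. A single application of near invariance to $z^k g(z^n) \in \MM_k$ for $k \geq 1$ gives $z^{k-1}g(z^n) \in \MM_{k-1}$, so $\widetilde{\MM}_k \subset \widetilde{\MM}_{k-1}$ and $m_k \leq m_{k-1}$. For the ``wrap-around'' link, if $g \in \widetilde{\MM}_0$ with $g(0) = 0$, writing $g = w\tilde g$ and applying near invariance once to $z^n \tilde g(z^n) \in \MM_0$ gives $z^{n-1}\tilde g(z^n) \in \MM_{n-1}$, hence $g/w = \tilde g \in \widetilde{\MM}_{n-1}$ and $m_0 - 1 \leq m_{n-1}$. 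The non-increasing sequence $(m_k)_{k=0}^{n-1}$ thus takes only the two values $m_0$ and $m_0 - 1$; letting $r$ be the largest index with $m_r = m_0$, assembling the direct sum shows that the monomial degrees appearing in $\MM$ are exactly $\{0, 1, \ldots, m_0 n + r\}$, giving $\MM = \pp_{m_0 n + r}$. The main technical obstacle will be the polynomial-density verification in each $\widetilde{\EE}_k$ and the clean treatment of the degenerate case where $\fw^p$ is itself a finite-dimensional polynomial subspace, since the formal transformation introduces a weight singularity at $w = 0$ which has to be absorbed.
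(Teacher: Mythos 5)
Your route is genuinely different from the paper's. The paper never changes variables: it reruns the de Branges--Heins argument of Theorem \ref{main3} directly, for symmetrized functionals ($R_\beta F_j=F_j$) and for the auxiliary functions $f,g$ of \eqref{ff}--\eqref{fg}, observing that $zf(z)$ and $zg(z)$ are functions of $z^n$ of zero exponential type so that Theorem \ref{hei} applies; this yields that the symmetrized parts $\MM^s=R_{0,n}\MM$ are ordered by inclusion, from which $\MM=\pp_m$ is deduced. Your reduction via $w=z^n$ to the component spaces $\widetilde\EE_k$ and an appeal to Corollary \ref{cor1} is a legitimate alternative which reuses the ordering theorem as a black box instead of repeating the Heins argument; the points you flag yourself (the singular factor $|w|^{(pk+2)/n-2}$ in the transformed weight, bounded point evaluations, density of polynomials via Proposition \ref{den}, and the dichotomy ``all monomials belong to $\fw^p$ or $\fw^p$ is a finite-dimensional polynomial space'') are genuine but routine and can be settled along the lines you indicate.

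There is, however, one genuine gap: Corollary \ref{cor1} is stated under the paper's standing convention that nearly invariant subspaces have \emph{no common zeros}, and it classifies only nontrivial (nonzero, proper) subspaces; you apply it to $\widetilde\MM_k$ without checking either condition. Without the no-common-zero property the conclusion fails outright: for a fixed zero-type $G$, the closed subspace $G\cdot\pp_j$ of $\widetilde\EE_k$ is nearly invariant but not of the form $\pp_m$, so a priori $\widetilde\MM_k$ need not be polynomial. Both points are fixable. Since $\MM$ itself has no common zeros, its near invariance upgrades to division invariance at every point of $\co$ (the equivalence quoted in the Introduction, \cite[Proposition 7.1]{by1}); dividing $z^kg(z^n)\in\MM_k$ successively by the $n$ distinct roots of $z^n-w_0$ shows that $\widetilde\MM_k$ is division invariant at every $w_0\ne 0$, and dividing an element of minimal vanishing order rules out a common zero there, while a common zero at $w=0$ would, by your own near-invariance computation, make every element of $\widetilde\MM_k$ divisible by all powers of $w$, forcing $\widetilde\MM_k=\{0\}$. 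You must also exclude $\widetilde\MM_k=\widetilde\EE_k$: if some component were full it would contain all polynomials, and your inclusions $\widetilde\MM_k\subset\widetilde\MM_{k-1}$ together with the wrap-around step propagate this around the cycle (using density of polynomials in each $\widetilde\EE_k$), giving $\MM\supseteq P_k f$ for all $f\in\fw^p$ and all $k$, hence $\MM=\fw^p$, contradicting nontriviality. With these two additions (and the separate treatment of the degenerate case $\fw^p=\pp_{k_0}$, where Corollary \ref{cor1} applies to $\MM$ directly), your assembly of the exponents giving $\MM=\pp_{m_0n+r}$ is correct.
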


The restriction on the order and type is sharp. It is easy to see, e.g.,
that nontrivial subspaces of the Fock space from Subsection \ref{barg1}
can be invariant under $f\mapsto f(-z)$, which corresponds to $n=2$. 

We finish this Introduction with one open problem.
\medskip
\\
{\bf Problem.} {\it Is it true that any nearly invariant subspace
without common zeros in a Fock-type space $\fa^p$ is 1-generated, that is,
of the form $\mathcal{M}_G$ for some $G\in \fa^p$?}
\medskip

In what follows we write $U(x)\lesssim V(x)$ if 
there is a constant $C$ such that $U(x)\leq CV(x)$ holds for all $x$ 
in the set in question. We write $U(x)\asymp V(x)$ if both $U(x)\lesssim V(x)$ and
$V(x)\lesssim U(x)$. The standard Landau notations
$\Ordo$ and $\ordo$ also will be used.
The zero set of an entire function $f$ will be denoted  by $\mathcal{Z}_f$. 
We denote by $D(z,R)$ the disc with center $z$ of radius $R$ and by 
$m_2$  area-Lebesgue measure in $\CC$.
\bigskip
\\
\textbf{Acknowledgement.} The authors are grateful to
Eskil Rydhe for the suggestion to look at the problem in the context of $L^p$-norms. 
\bigskip


\section{Preliminaries on Cauchy transforms and proof of Theorem \ref{main1}}

In what follows we will use the following two results from \cite{bbb-fock}
about the asymptotic behaviour of Cauchy transforms
of measures in the plane.  More subtle results about  Cauchy transforms on
rectifiable curves were obtained in \cite{mm, verd}.  We say that
$\Omega\subset \CC$ is a {\it set of zero planar density} if 
\[
\lim_{R\to+\infty} \frac{m_2(\Omega \cap D(0, R))}{R^2} = 0.
\]
It is well-known that for any finite complex Borel measure $\nu$ in the plane,
its Cauchy transform
$$
\mathcal{C}_{\nu}(z) = \int_\mathbb{C}\frac{d\nu(\xi)}{z-\xi} 
$$
is well-defined in the principal value sense $m_2$-a.e. Note that if $\nu$ is
compactly supported, we obviously have
\begin{equation}
\label{plan}
\mathcal{C}_{\nu}(z) = \frac{\nu(\mathbb{C})}{z} + \ordo\Big(\frac{1}{z}\Big),
\qquad |z|\to +\infty.
\end{equation}
From the results of \cite{mm} one can easily deduce that the same
asymptotics holds for an arbitrary $\nu$ outside some ``small'' set
(see \cite[Proof of Lemma 4.3]{bbb-fock}):
\[
\mathcal{C}_{\nu}(z) = \frac{\nu(\mathbb{C})}{z} + \ordo\Big(\frac{1}{z}\Big),
\qquad |z|\to +\infty, \ \ 
z\in \CC\setminus\Omega,
\]
where $\Omega$ is a set of zero planar density.

The following result of A. Borichev (see \cite[Lemma 4.2]{bbb-fock} and
also \cite{bbb-arxiv} where an inaccuracy of the proof is corrected) 
can be considered as an extension of the classical Liouville theorem. 

\begin{theorem}
\label{dens}
If an entire function $f$ of finite order is bounded on 
$\CC\setminus \Omega$ for some set $\Omega$ of zero planar density, 
then $f$ is a constant. 
\end{theorem}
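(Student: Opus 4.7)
The goal is to show that $f$ is bounded on all of $\CC$; the classical Liouville theorem then forces $f$ to be constant. Set $M := \sup_{\CC\setminus\Omega}|f|$, which is finite by hypothesis, and consider the open set $U := \{z\in\CC : |f(z)|>M\}$. Since $|f|\le M$ off $\Omega$, we have $U\subset\Omega$, so $U$ itself has zero planar density; it suffices to prove $U=\emptyset$.

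An easy first step rules out bounded components of $U$: if $V$ is such a component, continuity forces $|f|=M$ on $\partial V$ (the boundary lies outside $U$, and boundary points are limits of points where $|f|>M$), so the maximum principle gives $|f|\le M$ on $\overline V$, contradicting $V\subset U$. Hence every connected component of $U$ must be unbounded. Suppose such a component $V$ exists and fix $z_0\in V$. The function $u:=\log(|f|/M)$ is subharmonic and positive on $V$, vanishes on $\partial V$, and obeys $u(z)\le C_\eps|z|^{\rho+\eps}$ by the finite-order hypothesis (where $\rho$ is the order of $f$). Applying the two-constants theorem in the bounded subdomain $V_R := V\cap D(0,R)$ yields
\[
u(z_0) \le \omega_R\bigl(z_0,\, V\cap \partial D(0,R)\bigr)\cdot C_\eps R^{\rho+\eps},
\]
since $u=0$ on the remaining portion $\partial V\cap \overline{D(0,R)}$ of $\partial V_R$.

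The main step, and the main obstacle, is to show that the harmonic measure of the outer boundary decays faster than any polynomial in $R$. The natural tool is a Beurling--Nevanlinna type lower bound
\[
\log\frac{1}{\omega_R\bigl(z_0,\, V\cap\partial D(0,R)\bigr)} \gtrsim \int_{r_0}^{R}\frac{dr}{r\,\theta(r)},
\]
where $\theta(r)$ denotes the angular measure of $V\cap\{|z|=r\}$. By Fubini and the zero planar density of $V$, $\int_0^R r\,\theta(r)\,dr = m_2(V\cap D(0,R)) = o(R^2)$, making $\theta(r)$ arbitrarily small on a dense subset of radii. Feeding this into the Beurling estimate should yield super-polynomial decay of $\omega_R$, and sending $R\to\infty$ then gives $u(z_0)\le 0$, contradicting $z_0\in V$ and forcing $U=\emptyset$.

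The delicacy is quantitative: zero planar density provides no prescribed rate for $\varepsilon(R) := m_2(\Omega\cap D(0,R))/R^2 \to 0$, so one cannot simply argue circle by circle using Poisson--Jensen (the obvious bound $\log|f(z)|\le \log M + C\,R^{\rho+\eps}\,\theta(R)$ would require $\theta(R)=o(R^{-\rho-\eps})$, far beyond what is available pointwise). The Beurling--type integral, however, accumulates smallness of $\theta$ along a long range of radii and thus converts the averaged decay of $\theta$ coming from $\int r\theta \,dr = o(R^2)$ into the needed super-polynomial bound, provided one is careful with the geometry of the possibly irregular domain $V$. This subtle control is precisely where the original inaccuracy in \cite{bbb-fock} occurred, and where the corrected argument of \cite{bbb-arxiv} must be invoked.
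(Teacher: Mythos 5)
You should first note that the paper itself gives no proof of Theorem \ref{dens}: it is quoted as a result of Borichev, citing \cite[Lemma 4.2]{bbb-fock} together with the corrected version \cite{bbb-arxiv}. So your argument is not to be compared with an internal proof but stands as a proposed independent one. Its skeleton is sound: the reduction to showing $U=\{|f|>M\}\subset\Omega$ is empty, the maximum principle disposing of bounded components, the two-constants estimate $u(z_0)\le C_\eps R^{\rho+\eps}\,\omega_R$ on the component of $V\cap D(0,R)$ containing $z_0$, and the Carleman--Tsuji/Beurling bound $\omega_R\le C\exp\bigl(-\pi\int_{r_1}^{R}\frac{dr}{r\theta(r)}\bigr)$ are all legitimate; the last estimate does hold with $\theta(r)$ the \emph{total} angular measure of $V\cap\{|z|=r\}$ and requires no regularity of $V$, since it follows from the length--area lower bound for the extremal distance between a small circle about $z_0$ and the outer boundary arc, combined with Beurling's inequality relating harmonic measure to extremal distance.

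The genuine gap is at the decisive quantitative step, which you only assert (``should yield super-polynomial decay''), and the intermediate claim you do make --- that zero density makes $\theta(r)$ ``arbitrarily small on a dense subset of radii'' --- is not the right statement: a dense set of radii may have measure zero and contributes nothing to $\int dr/(r\theta)$. What is needed, and what is true, is an averaged bound on dyadic annuli. Writing $\eps(R)=m_2(\Omega\cap D(0,R))/R^2\to 0$, Fubini gives $\int_R^{2R}\theta(r)\,dr\le \frac1R\int_R^{2R}r\theta(r)\,dr\le 4R\,\eps(2R)$, and Cauchy--Schwarz, $R^2=\bigl(\int_R^{2R}dr\bigr)^2\le\int_R^{2R}\theta\,dr\cdot\int_R^{2R}\theta^{-1}\,dr$, yields $\int_R^{2R}\frac{dr}{r\theta(r)}\ge\frac{1}{8\,\eps(2R)}$. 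Since $\eps\to0$, each dyadic block contributes a quantity tending to $+\infty$, so by the Ces\`aro argument $\int_{r_0}^{R}\frac{dr}{r\theta(r)}\ge A\log R$ for every fixed $A$ once $R$ is large; choosing $A>(\rho+1)/\pi$ the Carleman--Tsuji bound dominates $C_\eps R^{\rho+\eps}$ and gives $u(z_0)\le 0$, the desired contradiction. With this computation inserted (plus the trivial remarks that $M=0$ forces $f\equiv0$ on a set of positive measure and hence everywhere, and that one works in the component of $V\cap D(0,R)$ containing $z_0$), your argument becomes a complete, self-contained proof, by a route the paper does not take.
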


We will also need the following simple observation:
for any finite complex measure $\nu$ we have 
\begin{equation}
\label{ct} 
\int_{\co}\frac{|\mathcal{C}_{\nu}(z)|}{1+|z|^3}dm_2(z)\le 10 |\nu|(\co).
\end{equation}
This  follows directly from Fubini's theorem and the estimate
\[
\int_\co\frac1{|z-\zeta|(1+|z|^3)}dm_2(z)\le \int_{|z-\zeta|\ge 1}
\frac1{1+|z|^3}dm_2(z)  +
\int_{|z-\zeta|< 1}\frac1{|z-\zeta|}dm_2(z),
\]
valid for any $\zeta\in \co$.

Our proofs are often based on duality arguments. We cannot identify the
dual space to $\fw^p$ with the space $\fw^q$, $1/p+1/q=1$, unless $p=2$.
However, by the Hahn--Banach theorem, any continuous linear functional
on $\fw^p$, $p\in [1, +\infty)$, may be represented in the form 
\begin{equation}
\label{Hahn-Banach} 
\Psi_g(f)=\int_\co fg W dm_2,\qquad g\in L^q(W),
\end{equation}
where $1/p+1/q=1$, $L^q(W) = \{f: f W^{1/q} \in L^q(\mathbb{C}, dm_2)\}$.
In the proof of Theorem \ref{main11} we will need a more detailed
information about the structure of the functionals and will show that
the function $g$ can be chosen to have
a certain ``antianalyticity'' (see Proposition \ref{anal}). 

\begin{proof}[Proof of Theorem \ref{main1}]
Let $\MM$ be a nontrivial backward shift invariant subspace of $\fw^p$ and let 
$\Psi_g$, $g\in L^q(W)$, be a functional annihilating $\MM$. 
To simplify notation we write $d\mu$ in place of $W m_2$.
Let $f \in \MM$. Then we have, for any $z\in \co$, 
$$
\int_\co \frac{f(\zeta) - f(z)}{\zeta-z} g(\zeta) d\mu(\zeta) = 0, 
$$
whence
$$
f(z) \mathcal{C}_{g\mu}(z) = \mathcal{C}_{fg\mu}(z). 
$$ 
Since $\pp$ is dense in $\fw^p$, we can choose the smallest $n\in \zl_+$ such that 
$\Psi_g(z^n) = \int_\co \zeta^n g(\zeta) d\mu(\zeta) \ne 0$.
Then 
$$\int_\co \frac{z^n-\zeta^n}{z-\zeta} g(\zeta) d\mu(\zeta) = 0 
$$ 
whence, by \eqref{plan}, 
there exists a set $\Omega$ of zero planar density such that
$$
\mathcal{C}_{g\mu}(z) = 
\frac{1}{z^n} \mathcal{C}_{\zeta^n g\mu}(z)
=
\frac{\alpha}{z^{n+1}} + \ordo\Big(\frac{1}{z^{n+1}}\Big), \qquad 
\mathcal{C}_{fg\mu}(z) = \Ordo\Big(\frac{1}{z}\Big),
$$
when $|z|\to+\infty$, $z\in \CC\setminus\Omega$, and $\alpha \ne 0$. 
We conclude that $|f(z)| = \Ordo(|z|^n)$ outside a set of zero planar measure.
Since $\fw^p$ is of finite order, $f$ is a polynomial of degree at most $n$ 
by Theorem \ref{dens}.
\end{proof}

\begin{remark} 
{\rm 1. The above argument applies to a somewhat more general class of
spaces than Fock-type spaces. The only property of the space we used is that
there exists a measure $\mu$ in $\co$ such that 
$\|f\|_{\fw^p} = \|f\|_{L^p(\mu)}$ for any $f\in \fw^p$. 
\smallskip
\\
2. In general,  if the conditions 
of Theorem \ref{main1} are not satisfied, even the ordering property for
backward shift invariant spaces may fail. Simple examples of this type are
provided by Paley--Wiener spaces. Let us denote, as usual, by $PW_I$  the
image  of $L^2( I)$ by the the Fourier transform. 
As mentioned in the Introduction, by the result in \cite{bom}, $PW_{[-a,a]}$
is a Fock-type space, and $PW_{[0,a]}$, $PW_{[-a, 0]}$
are backward shift invariant  subspaces which are orthogonal to each other. }
\end{remark}
\bigskip


\section{Proof of Theorem \ref{main11}}

The density of polynomials in radial spaces $\fw^p$ 
is apparently well known. We include a very short
proof of this fact for the sake of completeness. 

\begin{proposition}
\label{den} 
If $W$ is a radial weight with the property $\int_{\co}|z|^n W(z) dm_2(z)<+\infty$
for any $n$, then polynomials are dense in $\fw^p$ for all $p\in [1, +\infty)$.
\end{proposition}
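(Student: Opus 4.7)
The plan is to approximate each $f \in \fw^p$ by the Fej\'er (Ces\`aro) means of its Taylor expansion $f(z) = \sum_{n \ge 0} c_n z^n$. Setting
$$\sigma_N f(z) = \sum_{n=0}^N \Big(1 - \frac{n}{N+1}\Big) c_n z^n = \frac{1}{2\pi} \int_0^{2\pi} f(ze^{-i\theta}) K_N(\theta)\, d\theta,$$
where $K_N \ge 0$ is the usual Fej\'er kernel normalized by $\int_0^{2\pi} K_N\, d\theta = 2\pi$, each $\sigma_N f$ is a polynomial, hence by hypothesis an element of $\fw^p$. It therefore suffices to prove that $\sigma_N f \to f$ in $\fw^p$.

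For a uniform norm bound, I would apply Jensen's inequality with the probability measure $K_N(\theta)\, d\theta/(2\pi)$ to get
$$|\sigma_N f(z)|^p \le \frac{1}{2\pi} \int_0^{2\pi} |f(ze^{-i\theta})|^p K_N(\theta)\, d\theta,$$
and then integrate against $W\, dm_2$, swap the order by Fubini, and use the rotation invariance of $W\, dm_2$ (i.e.\ the radiality of $W$) to conclude that $\|\sigma_N f\|_{\fw^p} \le \|f\|_{\fw^p}$. Pointwise convergence $\sigma_N f(z) \to f(z)$ on all of $\co$ is automatic from the Ces\`aro summability of the convergent series $\sum c_n z^n$; together with Fatou's lemma this also gives $\|\sigma_N f\|_{\fw^p} \to \|f\|_{\fw^p}$.

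The remaining (and main) step is to upgrade pointwise convergence together with convergence of norms to $\|\sigma_N f - f\|_{\fw^p} \to 0$. The cleanest packaged tool is the Brezis--Lieb lemma, which applies uniformly for all $p \in [1, +\infty)$. An alternative more elementary route is Vitali's convergence theorem: the same Jensen/Fubini computation applied to a Borel set $E \subset \co$ yields
$$\int_E |\sigma_N f|^p W\, dm_2 \le \frac{1}{2\pi} \int_0^{2\pi} K_N(\theta) \int_{e^{-i\theta} E} |f|^p W\, dm_2\, d\theta,$$
which immediately supplies tightness (taking the rotation-invariant set $E = \{|z| > R\}$) and uniform absolute continuity (from the absolute continuity of $|f|^p W\, dm_2 \in L^1$) of $\{|\sigma_N f|^p W\}$. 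Either route delivers $\sigma_N f \to f$ in $\fw^p$, and since each $\sigma_N f$ is a polynomial, the density of polynomials follows.
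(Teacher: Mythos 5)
Your proof is correct, but it follows a genuinely different route from the paper's. You argue constructively: the Fej\'er means $\sigma_N f$ of the Taylor series are explicit polynomial approximants, the contraction bound $\|\sigma_N f\|_{\fw^p}\le\|f\|_{\fw^p}$ comes from Jensen plus Fubini plus the rotation invariance of $W\,dm_2$, pointwise convergence is Ces\`aro summability of the power series, and the upgrade to norm convergence is Brezis--Lieb (or Vitali, for which your restricted estimate over $E$ and $e^{-\imag\theta}E$ does give tightness and uniform absolute continuity). The paper instead argues by duality: it takes $g\in L^q(W)$ (or $L^\infty$ when $p=1$) annihilating the polynomials, forms $u(\e^{\imag\theta})=\int_\co f_\theta\, g\, W\,dm_2$ with $f_\theta(z)=f(\e^{\imag\theta}z)$, shows $u$ is continuous on the circle (using exactly the same ``pointwise convergence plus equality of norms implies norm convergence'' device that you invoke via Brezis--Lieb, but only for $f_{\theta_n}\to f_\theta$), and then computes via Fubini that all Fourier coefficients of $u$ vanish, so $u\equiv 0$ and $g$ annihilates every $f\in\fw^p$; Hahn--Banach finishes. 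The two arguments are essentially dual to one another (your Fej\'er averaging in $\fw^p$ versus the paper's Fourier analysis of $\theta\mapsto\Psi_g(f_\theta)$); yours buys an explicit summability scheme with a uniform norm bound, while the paper's is shorter and softer, needing only strong continuity of the rotation group on $\fw^p$. One tiny point worth a line in your write-up: the hypothesis gives integer moments $\int_\co|z|^nW\,dm_2<+\infty$, and you use $\int_\co|z|^{kp}W\,dm_2<+\infty$ for possibly non-integer $kp$; this follows from $|z|^{kp}\le 1+|z|^{\lceil kp\rceil}$, so polynomials indeed lie in $\fw^p$.
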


\begin{proof} 
Given $f\in \fw^p$ and $\theta\in [0,2\pi]$, we have that  
$f_\theta(z)=f(\e^{\imag\theta}z)$ belongs to $\fw^p$ and 
$\|f_\theta\|_{\fw^p}=\|f\|_{\fw^p}$. In particular, if  $\theta_n\to \theta$, 
from the equality of the norms  together with the  pointwise convergence 
$f_{\theta_n}(z)\to f_\theta(z)$, $z\in \co$, we deduce that
$\|f_{\theta_n}-f_\theta\|_{\fw^p}\to 0$, 
Now let $g\in L^q(W)$ or ($L^\infty(\co)$ if $p=1$) 
annihilate the polynomials, let  $f\in \fw^p$ and let 
$$
u(\e^{\imag\theta})=\int_\co f_\theta g W dm_2, \qquad \theta\in [0,2\pi].
$$
By the above argument $u$ is continuous on the unit circle. By Fubini's theorem, 
its Fourier coefficients satisfy
\[
\int_0^{2\pi}u(\e^{\imag\theta})\e^{-\imag n\theta}\frac{d\theta}{2\pi}
=\int_\co  g(z)
\Big( \int_0^{2\pi}f_\theta(z)\e^{-\imag n\theta}\frac{d\theta}{2\pi}\Big) W(z)
dm_2(z)=0, \quad  n\in\zl,
\]
since the inner integral either vanishes, or it is a constant multiple of $z^n$. 
Thus $u=0$ on the unit circle, whence $g$ annihilates any $f\in \fw^p$. 
\end{proof}

A key step in the proof of Theorem \ref{main11} is a simple identity  
for continuous linear functionals $\Psi_g$ on $\fw^p$.  With the representation 
\eqref{Hahn-Banach}  we have that if $f\in \fw^p$, $\zeta\in \co$, and 
$T_\zeta f(z)=L(1-\zeta L)^{-1}f(z)=\frac{f(z)-f(\zeta)}{z-\zeta}$, then 
\begin{equation}
\label{basic-identity}
\Psi_g(T_\zeta f)=f(\zeta)\mathcal{C}_{g\mu}(\zeta)-\mathcal{C}_{fg \mu}(\zeta),
\end{equation}
$m_2$-a.e., where $\mu=W m_2$. In order to prove Theorem \ref{main11}, 
we shall assume that the left hand side vanishes and estimate  
$|\mathcal{C}_{fg\mu}|$ from above and $|\mathcal{C}_{g\mu}|$ from below,  
in order to obtain an estimate for $|f|$. While \eqref{ct} provides 
a sufficiently good upper estimate for $\mathcal{C}_{fg\mu}$, the lower estimate for 
$\mathcal{C}_{g\mu}$ is more subtle and is addressed in the proposition  below.

We shall denote by $H^p$ the usual Hardy spaces on the unit disc $\mathbb{D}$,
by $A$ the disc algebra and by $P_+$ the usual Riesz projection
\[
P_+u(\rho \e^{\imag\theta})=\frac1{2\pi}\int_0^{2\pi}
\frac{u(\e^{\imag t})}{1-\rho \e^{\imag(\theta-t)}}dt, 
\qquad \rho<1,\, \theta\in [0,2\pi],
\]
where $u$ is a function in $L^1$ on the unit circle.
Recall that functions in  $H^p$  (or $A$) can be identified  with Fourier series in 
$L^p([0,2\pi])$ (or $C([0,2\pi])$) whose negative coefficients vanihsh, and this 
identification defines an isometry between the spaces in question.
Moreover, by the famous M. Riesz theorem $P_+$ is  a bounded operator from
$L^p([0,2\pi])$ onto $H^p$, whenever $1<p<+\infty$.

\begin{proposition}
\label{anal} 
Let  $1<p<+\infty$, $\frac1{p} +\frac{1}{q} =1$, and let $\fw^p$
contain all polynomials. 
Then there exists $K>0$ depending only on $p$ such that
every continuous linear functional $\Psi$ on $\fw^p$ can be represented
in the form \eqref{Hahn-Banach}	with $g\in L^q(W)$ such that\textup:
\begin{itemize}
\item[(i)]
$\|g\|_{L^q(W)} \le K\|\Psi\|$, $g\in C(\co)$,  and for all 
  $r>0$, there exists a function $u_r \in A$ such that
  $u_r(\e^{\imag t})=\overline{g}(r\e^{\imag t})$\textup;
\item[(ii)]
The Cauchy transform $\mathcal{C}_{g\mu}$ is continuous on $\co$,
satisfies for $r>0$
\begin{equation}
\label{bnm1}
\mathcal{C}_{g\mu}(r\e^{\imag t})=\frac{2\pi}{r\e^{\imag t}}\int_0^r
\overline{u_\rho}\Big(\frac{\rho \e^{\imag t}}{r}\Big)
\rho W(\rho) d\rho;
\end{equation}
\item[(iii)]
There exists a function $U_r \in A$ such that
$U_r(\e^{\imag t})=\overline{\mathcal{C}_{g\mu}}(r\e^{\imag t})$
and 
\begin{equation}
\label{bnm2}
\|U_r\|_{H^q} \le \frac{K}{r}\|\Psi\| \Big(
\int_0^r\rho W(\rho) d\rho\Big)^{\frac1{p}}.
\end{equation}
\end{itemize}
\end{proposition}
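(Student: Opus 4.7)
\medskip

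\noindent\emph{Proof plan.} My approach is to start from an arbitrary Hahn--Banach representative of $\Psi$ in $L^q(W)$ and replace it by its ``anti-analytic part'' circle by circle, using radiality of $W$ to ensure that the functional is preserved and the M.~Riesz theorem to control the norm.

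First, by duality and Hahn--Banach, pick $g_0 \in L^q(W)$ with $\|g_0\|_{L^q(W)} = \|\Psi\|$ and $\Psi(f) = \int_\co f g_0 W\,dm_2$ for $f \in \fw^p$. Expanding $g_0(r\e^{\imag\theta}) = \sum_{n\in\zl} g_{0,n}(r)\e^{\imag n\theta}$ and integrating in $\theta$, radiality of $W$ gives
\[
\Psi(z^m) = 2\pi \int_0^\infty r^{m+1} g_{0,-m}(r) W(r)\,dr, \qquad m \ge 0,
\]
so only the non-positive Fourier modes of $g_0$ in $\theta$ affect the pairing with polynomials and, by density of $\pp$ in $\fw^p$, with the entire space. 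Setting $g(r\e^{\imag\theta}) := \sum_{n \le 0} g_{0,n}(r)\e^{\imag n\theta}$ --- equivalently, $\overline{g(r\e^{\imag\cdot})} = P_+\overline{g_0(r\e^{\imag\cdot})}$ --- therefore gives a function that still represents $\Psi$. The M.~Riesz theorem applied on each circle, integrated against $\rho W(\rho)\,d\rho$, yields $\|g\|_{L^q(W)} \le K_p \|\Psi\|$ with $K_p$ depending only on $p$. For each $r > 0$, $u_r(w) := \sum_{k \ge 0}\overline{g_{0,-k}(r)}\,w^k$ defines an $H^q$-function on $\mathbb{D}$ whose boundary values are $u_r(\e^{\imag t}) = \overline{g(r\e^{\imag t})}$.

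Next, I would compute $\mathcal{C}_{g\mu}$ in polar coordinates by expanding the Cauchy kernel geometrically: in powers of $\rho\e^{\imag\phi}/z$ for $\rho < |z|$ and in powers of $z/(\rho\e^{\imag\phi})$ for $\rho > |z|$. Since $g$ has only non-positive Fourier modes in $\phi$, the inner integral in $\phi$ vanishes identically in the second case, while in the first case it equals $\frac{2\pi}{z}\,\overline{u_\rho}(\rho\e^{\imag t}/r)$, giving \eqref{bnm1}. Reading off the Fourier expansion of the conjugate and analytically extending from $|w|=1$ into the disc yields
\[
U_r(w) = \frac{2\pi w}{r} \int_0^r u_\rho(w\rho/r)\,\rho W(\rho)\,d\rho,
\]
which is analytic on $\mathbb{D}$ with $U_r(\e^{\imag t}) = \overline{\mathcal{C}_{g\mu}(r\e^{\imag t})}$. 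The bound \eqref{bnm2} then follows by taking $L^q(\mathbb{T})$-norm in $w$, applying Minkowski's integral inequality, then the subharmonicity estimate $\|u_\rho(s\,\cdot)\|_{L^q(\mathbb{T})} \le \|u_\rho\|_{H^q}$ for $s \le 1$, M.~Riesz again to bound $\|u_\rho\|_{H^q} \le K_p\|g_0(\rho\,\cdot)\|_{L^q(\mathbb{T})}$, and finally Hölder's inequality with exponents $p, q$ against the measure $\rho W(\rho)\,d\rho$.

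The main obstacle is achieving the continuity requirements: $g \in C(\co)$, $u_r \in A$ (not merely $H^q$), and continuity of $\mathcal{C}_{g\mu}$ on $\co$. The naive projection does not yield a continuous $g$, since $P_+$ is unbounded on $C(\mathbb{T})$. I would handle this by first approximating $g_0$ in $L^q(W)$ by functions smooth in $\theta$ (e.g., convolution in $\theta$ on each circle with a smooth approximate identity, together with a smooth cut-off in $r$); for such approximants the anti-analytic projection is smooth in $\theta$, the $u_r$ lie in $A$, and joint regularity in $r$ is retained. A limiting argument carried out in the affine coset of $L^q(W)$-representatives of $\Psi$, using the uniform M.~Riesz bound, then produces a continuous representative with the required structural properties, yielding (i)--(iii).
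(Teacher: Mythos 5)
Your treatment of (ii) and (iii) is essentially the paper's: the polar-coordinate expansion of the Cauchy kernel, the vanishing of the angular integral for $\rho>|\zeta|$, formula \eqref{bnm1}, and the chain Minkowski--subharmonicity--M.~Riesz--H\"older for \eqref{bnm2} all go through as you describe. The gap is exactly at the point you yourself flag in (i). Starting from an arbitrary Hahn--Banach representative $g_0\in L^q(W)$ and projecting circle by circle onto the non-positive modes gives a function that represents $\Psi$ (your density argument is fine, and in fact density is not even needed: on each circle an analytic $f$ is orthogonal to the strictly negative modes you discard) and satisfies the $L^q(W)$ bound, but it is only defined for a.e.\ $r$ and has no continuity whatsoever, whereas the proposition asserts $g\in C(\co)$, $u_r\in A$ for \emph{all} $r>0$, and continuity of $\mathcal{C}_{g\mu}$ on all of $\co$ -- and these pointwise/continuity properties are what the proof of Theorem \ref{main11} actually uses. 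Your proposed repair (smooth $g_0$ in $\theta$ and $r$, project, and pass to the limit in the affine coset of representatives using the uniform M.~Riesz bound) cannot deliver this: the uniform bound only gives control in $L^q(W)$, and continuity is not preserved under $L^q(W)$ (or weak) limits; since the smoothing parameter must go to zero, you have no equicontinuity, hence no mechanism forcing the limit representative to be continuous. Without some a priori modulus of continuity independent of the approximation, the limiting step simply does not produce the structural properties claimed.

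The paper's way around this is a specific idea missing from your proposal: do not take an arbitrary representative, but the extremal one. Since $1<p<+\infty$, $\fw^p$ is reflexive, so there is $f_0\in\fw^p$, $\|f_0\|_{\fw^p}=1$, with $\Psi(f_0)=\|\Psi\|$; equality in H\"older then forces the Hahn--Banach representative to be $h=\overline{f}_0|f_0|^{p-2}$, which, $f_0$ being entire, lies in $\Lip^\beta$ on every compact with $\beta=\min\{p,2\}-1$. Because the Riesz projection preserves $\Lip^\beta$ on the circle (Duren, Theorem 5.1), the functions $u_r=P_+F_r$ lie in $A$, are $\Lip^\beta(\overline{\mathbb{D}})$ uniformly for $r$ in compact sets, and this uniformity is precisely what yields joint continuity of $g(r\e^{\imag t})=\overline{u_r}(\e^{\imag t})$ on $\co$ and then continuity of $\mathcal{C}_{g\mu}$ via \eqref{bnm1}. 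So the correct fix is to build regularity into the representative from the start via the norming function $f_0$, rather than to try to recover it by approximation afterwards; with that replacement, the rest of your argument is sound.
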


\begin{proof} (i) Let $\Psi$ be a continuous linear functional on $\fw^p$ 
and choose, by the Hahn-Banach theorem, $h\in L^q(W)$ such that
$$ 
   \Psi(f)=\int_\co fh W dm_2, \qquad \|h\|_{L^q(W)}=\|\Psi\|.
$$
Since $1<p<+\infty$,  $\fw^p$ is reflexive, since it is  a  closed subspace
of the reflexive space $L^p(W)$. Therefore,  
there exists $f_0\in \fw^p$, $\|f_0\|_{\fw^p}=1$, 
such that $\Psi(f_0)=\|\Psi\|$. Then for the functions $f_0 W^{\frac1{p}}$, 
$hW^{\frac1{q}}$, we have equality in the H\"older inequality, which implies 
that $h=\overline{f}_0|f_0|^{p-2}$, $m_2$-a.e. on $\co$. 
Put $F_{r}(\e^{\imag t})=\overline{f}_0|f_0|^{p-2}(r\e^{\imag t})$,
$r\ge 0$, $t\in [0,2\pi]$.
It is easy to verify that $\overline{f}_0|f_0|^{p-2}\in \Lip^\beta(\Omega)$
for any compact subset 
$\Omega$ of $\co$ with $\beta = \min\{p,2\}-1$. Therefore, the function 
$u_r = P_+ F_r$ belongs to $A$ and, moreover, is 
in $\Lip^\beta (\overline{\mathbb{D}})$ (see, e.g., \cite[Theorem 5.1]{dur}). Let
\[
  g(r\e^{\imag t})=\overline{u_r} (\e^{\imag t}). 
\]
Then the function $g$ is well-defined everywhere in $\co$. Let us show that
$g$ is continuous.
Let
$$
g_n(r\e^{\imag t})=\overline{u_r}\Big(\big(1-\frac1{n}\big)\e^{\imag t}\Big).
$$
Since the functions $u_r$  are in $\Lip^\beta (\overline{\mathbb{D}})$ 
uniformly with respect to $r$ in any bounded set,
we conclude that $g_n(r\e^{\imag t})$ converge to $g(r\e^{\imag t})$
locally uniformly in $r$ and uniformly in $t$.
Finally, it is obvious that $g_n (r'\e^{\imag t})$ converges uniformly in $t$ to
$g(r\e^{\imag t})$ as $r'\to r$  for any fixed $n$. We conclude that  $g\in C(\co)$.

Using polar coordinates the estimate
$\|g\|_{L^q(W)} \le K\|h\|_{L^q(W)} = K\|\Psi\|$ 
follows by the M.~Riesz theorem. Finally, since $\fw^p$ consists
of functions which are continuous on every closed disc and analytic
inside, we have by definition 
\[ \int_0^{2\pi} 
f(r\e^{\imag t}) (u_r(\e^{\imag t}) - F_r (\e^{\imag t}))dt=0,
\]
hence using polar coordinates, 
\begin{align*}
\Psi(f) &= \int_{0}^{+\infty} rW(r) \int_0^{2\pi}  f(r \e^{\imag t} )
F_r (\e^{\imag t}) dt dr
\\&
= \int_{0}^{+\infty} rW(r) \int_0^{2\pi} 
f(r \e^{\imag t}) u_r(\e^{\imag t}) dt dr =\int_\co fg W dm_2.
\end{align*}
(ii) The function $\mathcal{C}_{g\mu}$ is well-defined everywhere in $\co$
and, for a fixed $\zeta \ne 0$, we have
\[
\mathcal{C}_{g\mu}(\zeta) = 
\lim_{\delta\to 0}\int_{\big| |z|-|\zeta| \big|>\delta}\frac{g(z)}{\zeta-z}
W(z)dm_2(z).
\]
Using (i) and a straightforward calculation we obtain
\[
\int_0^{2\pi}\frac{g(\rho \e^{\imag t})}{\zeta-\rho \e^{\imag t}}dt=0, \
\text{if}\ \rho>|\zeta|, \qquad 
\int_0^{2\pi}\frac{g(\rho \e^{\imag t})}{\zeta-\rho \e^{\imag t}}dt=
\frac{2\pi}{\zeta} \overline{u}_\rho\big({\rho}/{\overline{\zeta}}\big),
\ \text{if}\ \rho <|\zeta|.
\]
Hence,
\[
\mathcal{C}_{g\mu}(\zeta)= \frac{2\pi}{\zeta}\lim_{\delta\to 0}
\int_0^{|\zeta|-\delta}
\overline{u}_\rho\Big(\frac{\rho}{\overline{\zeta}}\Big) \rho W(\rho) d\rho
=
\frac{2\pi}{\zeta}\int_0^{|\zeta|}
\overline{u}_\rho\big({\rho}/{\overline{\zeta}}\big) \rho W(\rho)d\rho,
\]
since the functions $u_\rho$ are uniformly bounded for $\rho\le |\zeta|$. 
This proves formula \eqref{bnm1}.
In particular, $\mathcal{C}_{g\mu}$ is continuous at $\zeta$.

(iii) It follows from \eqref{bnm1} that the nonnegative Fourier
coefficients of the function $\mathcal{C}_{g\mu}(r\e^{\imag t})$ vanish, therefore 
there exists a function 
$U_r \in A$ such that $U_r(\e^{\imag t})
=\overline{\mathcal{C}_{g\mu}}(r\e^{\imag t})$,
$U_r(0) = 0$.
Finally, by (i),
$\|u_r\|_{H^q}^q = \frac1{2\pi} \int_0^{2\pi}|g(r\e^{\imag t})|^qdt$,
and the inequality \eqref{bnm2} follows by the H\"older inequality together
with the fact that
\[
\int_0^{2\pi}|u_\rho(s\e^{\imag t})|^qdt\le 2\pi \|u_\rho\|_{H^q}^q,
\qquad 0< s\le 1.
\]
\end{proof}

\begin{proof}[Proof of Theorem \ref{main11}] 
The density of polynomials in $\fw^p$ has already  
been  proved in Proposition \ref{den}. We are going 
to show that if $f\in \fw^p$ and $\Psi$ is a non-zero continuous linear functional 
on $\fw^p$ with $\Psi(T_\zeta f)=0$, $\zeta\in \co$, then $f$ is a polynomial of 
degree at most $n+1$, where $n$ is the smallest nonnegative integer such 
that $\Psi(z^n)\ne 0$. Since polynomials are dense in $\fw^p$ and $\Psi \ne 0$, 
such an integer exists.

We shall consider first the case when $n=0$. Consider the representation of
$\Psi$ given by Proposition \ref{anal}. The equality \eqref{basic-identity} gives
\begin{equation}
\label{basic-identity1} 
f(\zeta)\mathcal{C}_{g\mu}(\zeta) = \mathcal{C}_{fg\mu}(\zeta),
\end{equation}
$m_2$-a.e. 
For any fixed $r>0$, the function $\e^{\imag t}\mapsto 
f(r\e^{\imag t})U_r(\e^{\imag t})$ belongs to the disc algebra $A$. Using also
\eqref{basic-identity1} we obtain for all $\delta\in (0,1)$ 
\begin{equation}
\label{h1-est}
\begin{aligned} 
 \int_0^{2\pi}|f(\delta r\e^{\imag t})U_r(\delta \e^{\imag t})|dt& \le 
 \int_0^{2\pi}|f(r\e^{\imag t})U_r(\e^{\imag t})|dt \\ =
 & \int_0^{2\pi}|f(r\e^{\imag t})\mathcal{C}_{g\mu}(r\e^{\imag t})|dt 
 = \nonumber \int_0^{2\pi}|\mathcal{C}_{fg\mu}(r\e^{\imag t})|dt.
\end{aligned}
\end{equation}

Given an $H^q$-function $v$ with $v(0)=0$, we can write for $|z|<1$, 
$v(z)=zv'(0)+z^2v_1(z)$, where $\|v_1\|_{H^q}\le  2\|v\|_{H^q}$, and 
a standard estimate gives for all $\delta\in (0,1/2)$,
\[
|v(\delta \e^{\imag t})|\ge \delta|v'(0)|-
\frac{2^{1+\frac1{q}}\delta^2}{(1-\delta)^{\frac1{q}}}\|v\|_{H^q}
\ge \delta|v'(0)|- K'\delta^2\|v\|_{H^q}
\]
with $K'$ depending on $q$ only.

In order to apply this estimate to $U_r$ we note first that 
\begin{align*} 
U_r'(0) & =\frac{2\pi}{r}\int_0^r u_\rho(0)\rho W(\rho)d\rho
\\
& = \frac{1}{r}\int_0^r \Big( \int_0^{2\pi}u_\rho(\e^{\imag t}))dt\Big)
\rho W(\rho)d\rho 
= \frac{1}{r}\int_{|z|<r}\overline{g}(z)W(z)dm_2(z),
\end{align*}
whence $r U_r'(0) \to \overline{\Psi (1)}$ as $r\to+\infty$.
Together with the estimate \eqref{bnm2} this gives for $r>0$, $\delta\in (0,1/2)$,
\[
|U_r(\delta \e^{\imag t})| \ge \frac{\delta}{r} \left|\int_{|z|<r}\overline{g}(z)
W (z)dm_2(z)\right|- K''\delta^2 \|\Psi\|,
\]
with $K''=K' K\big(\int_\co W (z)dm_2(z)\big)^{\frac1{p}}$. Then there exists 
$r_0>0$, $\delta_0\in (0,1)$, such that for $r>r_0$, $\delta\in (0,\delta_0)$ 
we have
\[
|U_r(\delta \e^{\imag t})| \ge \frac{\delta|\Psi(1)|}{2r}.
\]
Using this in \eqref{h1-est} we obtain for $r,\delta$ as above,
\[ 
\int_0^{2\pi}|f(\delta r\e^{\imag t})|dt \le  \frac{2r}{\delta|\Psi(1)|}
\int_0^{2\pi}|\mathcal{C}_{fg\mu}(r\e^{\imag t})|dt,
\]
and from \eqref{ct} it follows that 
\[
\int_\co\frac{|f(\delta z)|}{1+|z|^4}dm_2(z)<+\infty.
\]
This implies that $f$ is a polynomial of degree at most one and 
finishes the proof in the case when $\Psi(1)\ne 0$.

If the smallest integer $n$ with $\Psi(z^n)\ne 0$ is strictly  positive,
we observe that $\Psi_n (f)=\Psi (z^nf)$ defines a continuous linear
functional on the space $\mathcal{F}_{W_n}^p$ with radial weight
$W_n(z)=|z|^{-n}W(z)$. The space $\mathcal{F}_{W_n}^p$  contains polynomials
and all functions of the form $L^n h$, $h\in \fw^p$. 
Moreover, since $h-z^n L^n h$ is a polynomial of degree strictly less than $n$, 
and $L^n T_\zeta=T_\zeta L^n$,  we have 
\[
\Psi_n(T_\zeta L^n f)= \Psi_n(L^n T_\zeta f)=\Psi(z^n  L^n T_\zeta f)=
\Psi(T_\zeta f)=0,\qquad \zeta\in \co.
\]
Since $\Psi_n(1)\ne 0$, the previous argument shows that $L^n f$ is a
polynomial of degree at most one, which completes the proof. 
\end{proof}
\bigskip


\section{The Ordering Theorem for  Fock-type spaces of small growth}
\label{orde}

The key idea of the proof of Theorem \ref{main3} is due to
L.~de~Branges \cite[Theorem 35]{br}. 
It is based on an application of a deep result by M. Heins \cite{heins}
(see also \cite[Lemma 8]{br} or \cite[Proposition 26]{rom}): {\it if 
$f$ and $g$ are entire function of zero exponential type and
$$
\min\big(|f(z)|, |g(z)|\big) \lesssim 1, \qquad z\in\co,
$$
then either $f$ or $g$ is a constant function.} However, we will need a
slightly refined version of Heins' theorem where the estimate holds
everywhere up to a small exceptional set. 
The proof of the following statement is identical to the proof of Heins' theorem 
(see proof of Proposition 26 in \cite{rom}) and we omit it. 

\begin{theorem}
\label{hei}
Let $f$ and $g$ be entire function of zero exponential type such that
$$
\min\big(|f(z)|, |g(z)|\big) \lesssim 1, \qquad z\in\co\setminus\Omega,
$$
where the set $\Omega$ satisfies
$$
\int_\Omega \frac{dm_2(z)}{|z|+1}<+\infty. 
$$
Then either $f$ or $g$ is a constant function.
\end{theorem}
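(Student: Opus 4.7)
The plan is to follow the classical proof of Heins' theorem verbatim, as presented in \cite[Proposition 26]{rom}, while verifying that the estimates continue to work when the pointwise bound $\min(|f|,|g|)\lesssim 1$ is only assumed to hold outside the exceptional set $\Omega$. Since the authors themselves declare the two proofs ``identical'', the main task is to identify the one point in the classical argument where $\Omega$ enters and check that the hypothesis on $\Omega$ is strong enough to preserve the conclusion.

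I would first argue by contradiction: assume that neither $f$ nor $g$ is constant. Writing $C$ for the implicit constant in the hypothesis, the open sets
\[
U=\{z\in\co:|f(z)|>C\}, \qquad V=\{z\in\co:|g(z)|>C\}
\]
are both non-empty and unbounded --- otherwise one of $f,g$ would be a bounded entire function, hence constant by Liouville --- and the hypothesis becomes $U\cap V\subset\Omega$. On each connected component (``tract'') of $U$, respectively $V$, the function $\log(|f|/C)$, respectively $\log(|g|/C)$, is a positive subharmonic function vanishing on the boundary.

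I would then apply Phragm\'en--Lindel\"of inside each tract together with an Ahlfors distortion / Carleman-type argument relating the growth of $|f|$ in a tract to the asymptotic angular width of that tract. As in the classical Heins argument, the zero exponential type of both $f$ and $g$ combined with the fact that their tract systems are essentially disjoint (since $U\cap V\subset\Omega$) leads to a contradiction unless one of $f,g$ has no tract at all, i.e., is bounded and therefore constant.

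The only substantive modification relative to the classical proof is the handling of $\Omega$, and I expect this to be the main technical point. The hypothesis $\int_\Omega (|z|+1)^{-1}dm_2(z)<+\infty$ is a quantitative ``thinness at infinity'' condition, strictly stronger than the zero planar density condition used in Theorem \ref{dens}; it is just strong enough to allow $\Omega$ to be absorbed into the tract boundaries via the maximum principle, perturbing the subharmonic estimates by at most an additive constant. After this absorption the remainder of the argument is verbatim the one given in \cite[Proposition 26]{rom}.
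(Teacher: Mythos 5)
Your plan is, in outline, the intended one: the paper itself writes out no proof, asserting that the argument is identical to Romanov's proof of Heins' theorem (\cite[Proposition 26]{rom}), and the skeleton you describe --- argue by contradiction, consider the tracts of $U=\{|f|>C\}$ and $V=\{|g|>C\}$ on which $\log(|f|/C)$, $\log(|g|/C)$ are positive and subharmonic with zero boundary values, and play a Carleman--Ahlfors/Phragm\'en--Lindel\"of growth estimate against the zero exponential type --- is exactly the skeleton of that proof.

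The gap is that the only content of the statement beyond classical Heins is the exceptional set, and you dispose of it with a single unproved sentence whose mechanism is not viable: $\Omega$ cannot be ``absorbed into the tract boundaries via the maximum principle'', because on $\Omega$ there is no bound on either $|f|$ or $|g|$, so the maximum principle gives no control there and no modification of the tracts keeps the boundary values $\le C$; moreover your sketch never actually uses the quantitative hypothesis on $\Omega$. The place where $\Omega$ really enters is pure angular bookkeeping. In Heins'/Romanov's argument the hypothesis $\min(|f|,|g|)\le C$ is used exactly once, via $\theta_f(r)+\theta_g(r)\le 2\pi$ on each circle $|z|=r$ (with $\theta_f,\theta_g$ the angular measures of the tracts), which through $\frac{1}{\theta_f}+\frac{1}{\theta_g}\ge\frac{4}{\theta_f+\theta_g}$ and the Carleman estimate $\pi\int^{R/2}\frac{dt}{t\,\theta(t)}\le\log\log M(R)+\Ordo(1)$ yields the contradiction with zero exponential type. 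With the exceptional set one only gets $\theta_f(r)+\theta_g(r)\le 2\pi+|\Omega_r|$, where $|\Omega_r|$ is the angular measure of $\Omega\cap\{|z|=r\}$; but in polar coordinates the hypothesis $\int_\Omega\frac{dm_2(z)}{|z|+1}<+\infty$ says precisely that $\int_1^\infty|\Omega_r|\,dr<+\infty$, hence $\int_1^\infty|\Omega_r|\,\frac{dr}{r}<+\infty$, so $\frac{1}{\theta_f}+\frac{1}{\theta_g}\ge\frac{2}{\pi}-\frac{|\Omega_r|}{\pi^2}$ loses only an $\Ordo(1)$ amount after integration against $dt/t$, and the final comparison $2\log R-\Ordo(1)\le 2\log R+\log\varepsilon_f(R)+\log\varepsilon_g(R)+\Ordo(1)$, with $\varepsilon_f(R),\varepsilon_g(R)\to0$ by the zero-type assumption, still gives the contradiction. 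Until this (short but essential) verification replaces the maximum-principle remark, the proposal does not prove the refinement; your aside that the hypothesis implies zero planar density is correct but plays no role in the argument.
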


\begin{proof}[Proof of Theorem \ref{main3}]
Let $\MM_1$, $\MM_2$ be two nearly invariant subspaces of $\fw^p$.  
Assume that neither $\MM_1 \subset \MM_2$ nor
$\MM_2 \subset \MM_1$. Then we can choose two nonzero functionals
$\Psi_{F_1}$ and $\Psi_{F_2}$, $F_1, F_2 \in L^q(W)$ such that 
$\Psi_{F_1}$ annihilates $\MM_2$ but not $\MM_1$, while
$\Psi_{F_2}$ annihilates $\MM_1$ but not $\MM_2$.

Let $F\in \MM_1$ and $G\in \MM_2$. Define two functions
\begin{equation}
\label{ff}
f(z)  = \Psi_{F_1} \bigg(\frac{F - \frac{F(z)}{G(z)}G}{\zeta-z}\bigg)
=  \int_{\co} \frac{F(\zeta) - \frac{F(z)}{G(z)}G(\zeta)}{\zeta-z}
F_1(\zeta) d\mu(\zeta), 
\end{equation}
\begin{equation}
\label{fg}
g(z)  = \Psi_{F_2} \bigg( \frac{G - \frac{G(z)}{F(z)}F}{\zeta-z}\bigg) 
= \int_{\co} \frac{G(\zeta) - \frac{G(z)}{F(z)}F(\zeta)}{\zeta-z}
F_2(\zeta) d\mu(\zeta),
\end{equation}
where $\mu = W m_2$. The functions $f$ and $g$ are well-defined and analytic
on the sets $\{z: G(z)\ne 0\}$
and $\{z: F(z)\ne 0\}$, respectively.
\medskip
\\
{\bf Step 1:} {\it $f$ and $g$ are entire functions of zero exponential type, 
$f$ does not depend on the choice of $G$ and $g$ does not depend on the
choice of $F$}.

Let $f_1$ be a function associated in a similar way to $G_1 \in \MM_2$,
$$
f_1(z)  = 
\int\frac{F(\zeta) - \frac{F(z)}{G_1(z)}G_1(\zeta)}{\zeta-z} F_1(\zeta) d\mu(\zeta).
$$
Then, for $G(z) \ne 0$ and $G_1(z) \ne 0$, we have
$$
f_1(z) - f(z) = \frac{F(z)}{G(z)G_1(z)} 
\int \frac{G_1(z)G(\zeta) - G(z)G_1(\zeta)}{\zeta-z} F_1(\zeta) d\mu(\zeta) = 0,
$$
since $\frac{G_1(z)G - G(z)G_1}{\zeta-z} \in \MM_2$.

Now choosing $G$ such that $G(z) \ne 0$ we can extend $f$ analytically to 
a neighborhood of the point $z$. Thus, $f$ and $g$ are entire functions.

Since $fG = G\mathcal{C}_{FF_1\mu} - F \mathcal{C}_{GF_1\mu}$, it follows
from \eqref{ct} that 
$$
\int_{D(z, 1)} |f(\zeta)G(\zeta)|dm_2(\zeta)
\lesssim (|z|^3+1) \sup_{\zeta\in D(z, 1)} \big( |F(\zeta)| +|G(\zeta)|\big),
$$
whence $fG$ is an entire function of zero type. 
Since $G$ itself is of zero exponential type,
$f$ is of zero type by the standard properties of entire functions (see,
e.g., \cite[Chapter 1, \S 9]{lev}).
\medskip
\\
{\bf Step 2:} {\it Either $f$ or $g$ is identically zero.}

Given $z$ such that $F(z)\ne 0$, $G(z) \ne 0$, we have
\begin{equation}
\label{ots}
\begin{aligned}
|f(z)| & \le \bigg|\int \frac{F(\zeta)F_1(\zeta)}{\zeta-z} d\mu(\zeta) \bigg| + 
\bigg|\frac{F(z)}{G(z)}\bigg|\cdot 
\bigg|\int \frac{G(\zeta)F_1(\zeta)}{\zeta-z} d\mu(\zeta) \bigg|,\\
|g(z)| & \le \bigg|\int \frac{G(\zeta)F_2(\zeta)}{\zeta-z} d\mu(\zeta) \bigg| + 
\bigg|\frac{G(z)}{F(z)}\bigg|\cdot 
\bigg|\int \frac{F(\zeta)F_2(\zeta)}{\zeta-z} d\mu(\zeta)\bigg|.
\end{aligned}
\end{equation}
Note that, by \eqref{ct}, for any finite measure $\nu$, there exists a set
of finite measure such that
$|C_\nu(z)| \le |z|^3$ outside this set. Then we can find a set $\Omega$ of
finite measure such that
$$         
|f(z)| \lesssim |z|^3\bigg(1+ \bigg|\frac{F(z)}{G(z)}\bigg|\bigg), \qquad
|g(z)| \lesssim |z|^3\bigg(1+ \bigg|\frac{G(z)}{F(z)}\bigg|\bigg),
\qquad z\notin \Omega.
$$
We conclude that
$$
\min\big(|f(z)|, |g(z)|\big) \lesssim |z|^3, \qquad z\notin \Omega.
$$
By Theorem \ref{hei} either $f$ or $g$ is a polynomial. 

Assume that $f$ is a nonzero polynomial. By \eqref{plan}, 
there exists a set $\Omega$ of zero area density such that
$$
\bigg|\int\frac{F(\zeta) F_1(\zeta)}{\zeta-z} d\mu(\zeta) \bigg|
+
\bigg|\int\frac{G(\zeta) F_1(\zeta)}{\zeta-z} d\mu(\zeta) \bigg| 
=\Ordo\Big(\frac{1}{|z|}\Big), \qquad z\notin \Omega. 
$$
Hence, $|F(z)/G(z)| \to+\infty$ as $|z|\to+\infty$, $z\notin\Omega$, and so
$$
|g(z)| \le \bigg|\int\frac{G(\zeta) F_2(\zeta)}{\zeta-z} d\mu(\zeta) \bigg| + 
\bigg|\frac{G(z)}{F(z)}\bigg|\cdot 
\bigg|\int \frac{F(\zeta) F_2(\zeta)}{\zeta-z} d\mu(\zeta)\bigg| 
= \Ordo\Big(\frac{1}{|z|}\Big), \qquad w\notin \Omega\cup \widetilde{\Omega}, 
$$
where $\widetilde{\Omega}$ is another set of zero area density.
Thus, $g$ tends to zero outside a set of zero density and so $g\equiv 0$ by
Theorem \ref{dens}.
\medskip
\\
{\bf Step 3:} {\it End of the proof.} 

Without loss of generality, let 
$f\equiv 0$. Then
$$
\frac{F(z)}{G(z)}\int\frac{G(\zeta) F_1(\zeta)}{\zeta-z}  d\mu(\zeta)
=\int\frac{F(\zeta) F_1(\zeta)}{\zeta-z}  d\mu(\zeta)
$$                                             
for any $F\in \MM_1$, $G\in \MM_2$.

Recall that $\Psi_{F_1}$ does not annihilate $\MM_1$ and so we can choose
$F \in \MM_1$ such that
$\Psi_{F_1}(F) = \int F F_1 d\mu \ne 0$. Then, by \eqref{plan},
$$
\bigg|\int\frac{F(\zeta)F_1(\zeta)}{\zeta-z}  d\mu(\zeta)\bigg|
\gtrsim \frac{1}{|z|}, 
\qquad z\notin \Omega,
$$
for some set $\Omega$ of zero density. Since $\Psi_{F_1}(G) =0$ for 
any $G\in \MM_2$, 
we have (again by \eqref{plan}) 
$$
\bigg|\int\frac{G(\zeta)F_1(\zeta)}{\zeta-z}  d\mu(\zeta)\bigg| =
\ordo\Big(\frac{1}{|z|}\Big), 
\qquad |z|\to+\infty, \ z\notin \widetilde{\Omega},
$$
where $\widetilde{\Omega}$ is another set of zero density. We conclude that
$|F(z)/G(z)| \to +\infty$ when $|z|\to +\infty$ 
outside the set of zero density 
$\Omega \cup \widetilde{\Omega}$ (for any $G\in\MM_2$). 
Applying this fact and \eqref{plan} to $g$ we conclude that $|g(z)|\to 0$ 
outside a set of zero density and so $g\equiv 0$ by Theorem \ref{dens}.

Thus, for any $G\in\MM_2$, we have
\begin{equation}
\label{ghj}
\frac{G(z)}{F(z)}\int\frac{F(\zeta)F_2(\zeta)}{\zeta-z}  d\mu(\zeta)
=\int\frac{G(\zeta) F_2(\zeta)}{\zeta-z}  d\mu(\zeta)
\end{equation}
and we may repeat the above argument. Choose $G \in \MM_2$
such that $\Psi_{F_2}(G) = \int G F_2 d\mu \ne 0$. Then,
by \eqref{plan}, the modulus of the 
right-hand side in \eqref{ghj} is $\gtrsim |z|^{-1}$,
while the left-hand side is $o(|z|^{-1})$ when $|z|\to+\infty$ 
outside a set of zero density. This contradiction proves Theorem \ref{main3}.
\end{proof}
\bigskip


\section{Failure of  the Ordering Theorem in spaces of large growth}
\label{barg}

\subsection{Examples via the Bargmann transform} 
\label{barg1}

We start with the case of the classical space $\mathcal{F}_2$ with the
weight $W(z) = \exp(-|z|^2)$.
The Bargmann (Segal--Bargmann) transform is the isometry $\mathcal{B}$ 
from $L^2(\mathbb{R})$ onto $\mathcal{F}_2$ given by 
$$
\mathcal{B} u(z)=\int_\mathbb{R}\e^{-\frac{t^2+z^2}{2}+\sqrt{2}tz} u(t)dt,
\qquad u\in L^2(\mathbb{R}).
$$
Let $I\subsetneq  \mathbb{R}$ be a nonempty open interval and let 
$$
\mathcal{M}_I=\mathcal{B} L^2_I, \qquad L^2_I = \big\{u:~u=0 \ \text{a.e. off }
I\big\}.
$$ 
Obviously, $\mathcal{M}_I$ is nontrivial and closed in $\mathcal{F}_2$.  
We want to show that these subspaces are nearly invariant. This will follow 
directly from an alternative description in terms of the
Paley--Wiener type spaces $PW_I$.
Recall that $PW_I$ is by definition the image of $L^2_I$ under the Fourier
transform.  
Also, if $I=[a,b]$, then $f\in PW_I$ if and only if $f$ is the restriction to
the real axis of an entire function $F$ which is in $L^2(\rl)$ and satisfies
$$
|F(z)| \le 
\begin{cases}
C\e^{b\ima z}, & \ima z>0,\\
C\e^{a\ima z}, & \ima z<0.
\end{cases}
$$

\begin{proposition}\label{ex-via-Bargmann} 
For a nonempty bounded open interval  $I\subset \mathbb{R}$ we have
$$
\mathcal{M}_I=\big\{f\in \mathcal{F}_2: \ \e^{-\frac{z^2}{2}}f(-\imag z) \in
PW_{\sqrt{2}I}\big\}.
$$  
In particular, $\mathcal{M}_I$ is nearly invariant with no common zeros.
\end{proposition}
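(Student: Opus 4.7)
The plan is to compute $\e^{-z^2/2}(\mathcal{B}u)(-\imag z)$ explicitly and recognize it, up to a change of variable, as the Fourier transform of a function whose support is governed by the support of $u$. Substituting $-\imag z$ into the Bargmann kernel gives $(-\imag z)^2=-z^2$ and $\sqrt{2}\,t(-\imag z)=-\imag\sqrt{2}\,tz$, so
\[
(\mathcal{B}u)(-\imag z) \;=\; \e^{z^2/2}\int_\rl \e^{-t^2/2}u(t)\,\e^{-\imag\sqrt{2}\,tz}\,dt.
\]
The change of variable $s=\sqrt{2}\,t$, together with the definition $v(s):=2^{-1/2}\e^{-s^2/4}u(s/\sqrt{2})$, converts this into
\[
\e^{-z^2/2}(\mathcal{B}u)(-\imag z) \;=\; \int_\rl v(s)\,\e^{-\imag sz}\,ds,
\]
i.e.\ the Fourier transform of $v$. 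The map $u\mapsto v$ is a bijection of $L^2(\rl)$ onto itself (multiplication by a strictly positive function composed with a dilation), and it sends $L^2_I$ bijectively onto $L^2_{\sqrt{2}I}$. Since $\mathcal{B}\colon L^2(\rl)\to \mathcal{F}_2$ is a bijection and $PW_{\sqrt{2}I}$ is, by definition, the Fourier image of $L^2_{\sqrt{2}I}$, every $f\in \mathcal{F}_2$ has a unique representation $f=\mathcal{B}u$ with $u\in L^2(\rl)$, and the chain of equivalences $f\in\MM_I \Leftrightarrow u\in L^2_I \Leftrightarrow \e^{-z^2/2}f(-\imag z)\in PW_{\sqrt{2}I}$ yields the claimed identity.

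Near invariance and the absence of common zeros then follow from the corresponding facts about $PW_{\sqrt{2}I}$. Suppose $f\in\MM_I$ with $f(0)=0$, and set $g(z):=\e^{-z^2/2}f(-\imag z)\in PW_{\sqrt{2}I}$; then $g(0)=0$. By the standard division property of Paley--Wiener spaces on bounded intervals (the quotient remains entire of the correct exponential type in each half-plane and stays in $L^2(\rl)$), we get $g(z)/z\in PW_{\sqrt{2}I}$. A direct computation gives $\e^{-z^2/2}(f/z)(-\imag z)=\imag\,g(z)/z$, and $f/z\in\mathcal{F}_2$ by the analogous division property of $\mathcal{F}_2$ (the Gaussian weight is harmless near $0$, and division by $z$ only improves decay at infinity). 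Therefore $f/z\in\MM_I$. For the no-common-zero assertion, the identity $f(w)=\e^{-w^2/2}g(\imag w)$ shows that a common zero of $\MM_I$ at $w$ would force every element of $PW_{\sqrt{2}I}$ to vanish at $\imag w$, which is impossible since $PW_{\sqrt{2}I}$ is a reproducing kernel Hilbert space with nowhere-vanishing reproducing kernel.

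The only substantive step is spotting the change of variables that reveals the composition of $\mathcal{B}$ with $z\mapsto -\imag z$ and the Gaussian multiplier $\e^{-z^2/2}$ as a genuine Fourier transform; once this is done, the identification with $PW_{\sqrt{2}I}$ and the subsequent deductions are routine.
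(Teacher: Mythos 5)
Your argument is correct and is essentially the paper's proof: both compute $\e^{-z^2/2}(\mathcal{B}u)(-\imag z)$ as the Fourier transform of a Gaussian-weighted dilate of $u$, use the boundedness of $I$ to invert the Gaussian factor, and deduce near invariance from the corresponding property of the Paley--Wiener space. One small correction: the map $u\mapsto v$, $v(s)=2^{-1/2}\e^{-s^2/4}u(s/\sqrt{2})$, is not a bijection of $L^2(\rl)$ onto itself (its inverse multiplies by the unbounded factor $\e^{s^2/4}$), but the restricted bijection $L^2_I\to L^2_{\sqrt{2}I}$ that your argument actually uses is valid precisely because $I$ is bounded, which is the same point where the paper invokes boundedness of $I$.
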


\begin{proof}   From the definition of $\mathcal{B}$ we have for
$f=\mathcal{B} u \in \mathcal{F}_2$, $u\in L^2_I$,  
$$
\e^{-\frac{z^2}{2}}f(-\imag z)=\int_\mathbb{R}
\e^{-\frac{t^2}{2}-\sqrt{2}\imag zt} u(t)dt
\in PW_{\sqrt{2} I}.
$$
Conversely, any function $v$ in $L^2_I$  with bounded $I$ can be written as
$v(t) = \e^{-t^2/2}u(t)$ and so 
$$
\int_I \e^{-\sqrt{2}\imag zt} v(t)dt = e^{-\frac{z^2}{2}}f(-\imag z)
$$
for $f =\mathcal{B} u\in \mathcal{F}_2$.

It follows immediately from the structure of the Paley--Wiener spaces
that $\mathcal{M}_I$ is nearly invariant. 
\end{proof}

Clearly, subspaces $\mathcal{M}_I$ are not ordered by inclusion. 
For example, if  $I\cap J=\emptyset$, we have
$\mathcal{M}_I\cap\mathcal{M}_J=\{0\}$, and both are nontrivial. 


\subsection{Phragm\'en--Lindel\"of approach}

Next we consider a more general construction of nontrivial 
nearly invariant subspaces in the radial Fock-type spaces $\fa^p$,
$\alpha\ge 1$, with the weight $W_\alpha(z) = \exp(-|z|^\alpha)$. 
The idea behind our construction is to consider subspaces consisting
of functions satisfying a growth  restriction in a fixed angle. The
remarkable fact is that the growth restriction and the angle can be chosen
such that the  Phragm\'en--Lindel\"of principle guarantees that the
corresponding subspace is closed in $\fa^p$.

Let us record first the standard pointwise estimate in these spaces.  To this
end, note that for any $\alpha\ge 1$ we have $W_\alpha(z)\asymp W_\alpha(\zeta)$,
$|\zeta-z| \le |z|^{2-2\alpha}$, 
$|z|\ge 1$. Thus, using the fact that
$|f(z)|^p \le \frac{1}{\pi r^2} \int_{D(z, r)} |f(\zeta)|^p dm_2(\zeta)$,
we get 
\begin{equation}
\label{mir}
|f(z)|^p \e^{-|z|^\alpha} \le A |z|^{2\alpha-2} \|f\|^p_{\fa^p}, \qquad |z|>1,
\end{equation}
for some constant $A = A(\alpha, p)$. 

Now let $\Omega = \{z: 0< \arg z <  2\pi/\alpha\}$ and let $\Gamma =
\partial \Omega$. Fix some polynomial $P$ with zeros outside $\overline{\Omega}$ and ${\rm deg}\,
P \ge (2\alpha-2)/p$. For any $a\ge 0$, put 
$$
\mathcal{M}_a = \bigg\{ f\in \fa^p:\ \Big| \frac{f(z)}{P(z)}
\e^{-\frac{1}{p} z^\alpha} \Big| 
\le C \e^{a|z|^{\alpha/2}}, \ z\in\Omega,  \ \text{for some} \ C>0 \bigg\}.
$$

\begin{proposition}
\label{phl} 
$\mathcal{M}_a$ is a closed nearly invariant subspace of $\fa^p$, with no
common zeros in $\mathbb{C}$.
\end{proposition}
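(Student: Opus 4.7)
The proof should split into three claims: near invariance, closedness, and absence of common zeros, with the pointwise bound \eqref{mir} and a Phragm\'en--Lindel\"of estimate in the critical angle as the two main ingredients. The preliminary observation is that on $\Gamma = \partial\Omega$ one has $\arg z \in \{0, 2\pi/\alpha\}$, so $z^\alpha$ is a positive real and $|\e^{-z^\alpha/p}| = \e^{-|z|^\alpha/p}$. Combining this identity with \eqref{mir} and the hypothesis $\deg P \geq (2\alpha-2)/p$ (so that $|P(z)|\gtrsim |z|^{(2\alpha-2)/p}$ for large $|z|$, while $P$ is zero-free on $\overline{\Omega}$ by construction) yields a uniform boundary bound of the form
\[
\left|\frac{f(z)}{P(z)}\e^{-z^\alpha/p}\right| \leq K\|f\|_{\fa^p}, \qquad z\in\Gamma,
\]
with $K$ depending only on $\alpha$, $p$, $P$ and independent of $f\in \fa^p$.

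Near invariance is straightforward. If $f\in \mathcal{M}_a$ satisfies $f(0)=0$, then $f/z \in \fa^p$ by the division property, and the defining growth estimate is inherited for $|z|\geq 1$ by dividing through by $|z|\geq 1$, while on $|z|<1$ the quotient $(f/z)/P \cdot \e^{-z^\alpha/p}$ is continuous and hence bounded by compactness. Thus $f/z \in \mathcal{M}_a$ with a possibly larger constant.

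Closedness is the main step. Suppose $f_n \to f$ in $\fa^p$ with $f_n \in \mathcal{M}_a$; I would show that the constant $C$ in the defining estimate can always be taken proportional to $\|f\|_{\fa^p}$ with a universal multiplier, so that the estimate survives the limit. Set $g_n(z) = f_n(z)/P(z)\cdot \e^{-z^\alpha/p}$, analytic in a neighborhood of $\overline{\Omega}$. The boundary bound above gives $|g_n| \leq K\|f_n\|_{\fa^p}$ on $\Gamma$. Now the opening of $\Omega$ is exactly $2\pi/\alpha = \pi/(\alpha/2)$, so the principal branch of $w = z^{\alpha/2}$ maps $\Omega$ conformally onto the upper half-plane, sending $\Gamma$ onto $\mathbb{R}$. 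Under this change of variables, $g_n$ becomes a function $G_n$ analytic in the upper half-plane, continuous on its closure, bounded by $K\|f_n\|_{\fa^p}$ on $\mathbb{R}$, and of exponential type at most $a$ (since $|G_n(w)| = |g_n(z)| \leq C_n \e^{a|z|^{\alpha/2}} = C_n \e^{a|w|}$). The classical Phragm\'en--Lindel\"of theorem for the half-plane then yields $|G_n(w)| \leq K\|f_n\|_{\fa^p}\,\e^{a\ima w}$, which translates back to
\[
|g_n(z)| \leq K\|f_n\|_{\fa^p}\,\e^{a|z|^{\alpha/2}\sin((\alpha/2)\arg z)} \leq K\|f_n\|_{\fa^p}\,\e^{a|z|^{\alpha/2}}, \qquad z\in\Omega.
\]
Since $f_n \to f$ pointwise by \eqref{mir}, so does $g_n \to g := f/P\cdot \e^{-z^\alpha/p}$ in $\Omega$, and the estimate survives the limit, placing $f$ in $\mathcal{M}_a$.

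For the absence of common zeros, one must produce, for each $w_0\in\co$, a function in $\mathcal{M}_a$ not vanishing at $w_0$. In the classical case $\alpha=2$, the Bargmann subspaces $\mathcal{M}_I$ of Proposition \ref{ex-via-Bargmann} sit inside $\mathcal{M}_a$ for an appropriate choice of $a$ and already have no common zeros; for general $\alpha \geq 1$, an analogous Laplace-type integral construction (as pursued in the remainder of Section \ref{barg}) produces a nonzero $f_0\in \mathcal{M}_a$, whereupon the near invariance established above, combined with iterated division by linear factors at the zeros of $f_0$, produces elements of $\mathcal{M}_a$ not vanishing at any prescribed $w_0$. The principal obstacle throughout is the Phragm\'en--Lindel\"of step: the growth rate $\e^{a|z|^{\alpha/2}}$ sits exactly at the critical order for the sector $\Omega$, so the naive maximum principle does not apply; the conformal reduction to the half-plane is what makes the argument go through.
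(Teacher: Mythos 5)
Your proof is correct and follows essentially the same route as the paper: a uniform bound on $\Gamma$ obtained from \eqref{mir} and the degree condition on $P$, the conformal map $z\mapsto z^{\alpha/2}$ (the paper uses its inverse $z\mapsto z^{2/\alpha}$) to move to the upper half-plane, and the classical Phragm\'en--Lindel\"of principle to replace the constants $C_n$ by the uniform boundary bound before passing to the pointwise limit. The paper likewise treats near invariance and the absence of common zeros as immediate, deferring the construction of nonzero elements to the subsequent lemma, just as you do.
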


\begin{proof} 
It is sufficient to show that $\mathcal{M}_a$ is closed, since the remaining
assertions are immediate.
Let $f_n\in \mathcal{M}_a$ be a sequence convergent in $\fa^p$ to some
function $f_0$. Then we have 
$$
\Big| \frac{f_n(z)}{P(z)} \e^{-\frac{1}{p} z^\alpha}\Big|
\le C_n \e^{a|z|^{\alpha/2}}, \qquad z\in \Omega,  
$$
for some constants $C_n$. We need to show that the constants $C_n$ can be
replaced by a  uniform bound. Once this is done, by taking the  pointwise
limit it follows that  $f_0$ satisfies the same estimate growth estimate
in the angle $\Omega$,  i.e. $f_0\in \mathcal{M}_a$.

Since $\sup_{n}\|f_n\|_{\fa^p} <+\infty$, it follows from \eqref{mir} that
there is a constant $A_0$ (independent of $n$) such that
$$
\Big| \frac{f_n(z)}{P(z)} \e^{-\frac{1}{p} z^\alpha}  \Big| \le A_0, \qquad z\in \Gamma.
$$
For $z\in \cp$, put 
$$
g_n(z) = \frac{\e^{z^2/p}f_n(z^{2/\alpha})}{P(z^{2/\alpha})}.
$$
Then $g_n$ is analytic in $\cp$ and continuous up to the boundary,
$|g_n(x)| \le A_0$, $x\in \rl$,
and
$$
|g_n(z)| \le C_n e^{a|z|}, \qquad z\in \cp.
$$
By the classical Phragm\'en--Lindel\"of principle, 
$|g_n(z)| \le A_0 e^{a \ima z}$, $z\in \cp$, whence for all $n$ we can replace
$C_n$ by $A_0$ and the result follows.
\end{proof}

One can also consider subspaces with $a<0$ if we replace the estimate
in the definition of $\mathcal{M}_a$ by two conditions:
$$
\Big| \frac{f(z)}{P(z)} \e^{-\frac{1}{p} z^\alpha} \Big| \le
C \e^{a|z|^{\alpha/2}}, \quad \arg z = \pi/\alpha,
\qquad 
\Big| \frac{f(z)}{P(z)} \e^{-\frac{1}{p} z^\alpha}\Big| \le
C \e^{ d |z|^{\alpha/2}}, \quad z\in \Omega,
$$
for some $d>0$. Also, in general, if $\alpha$ is sufficiently large, 
one can impose conditions in several angles of size $2\pi/\alpha$
in order to obtain further examples of closed  nontrivial nearly
invariant subspaces.

\begin{example}
{\rm In the case of the Fock space $\mathcal{F}_2$ one can impose conditions
in both half-planes:
$$
\mathcal{M}_{a,b} = \{f\in  \mathcal{F}_2:   |f(z)\e^{-z^2/2}| \le C\,\e^{b \ima z},
\ima z>0, 
\ \text{and}\ |f(z)\e^{-z^2/2}| \le C\e^{a \ima z}, \ima z<0\}.
$$
These subspaces actually coincide with those constructed in Subsection \ref{barg1}.}
\end{example}

\begin{example}
{\rm Consider the space $\mathcal{F}_1$ with the weight $W(z) = \exp(-|z|)$. 
Then $f\in \mathcal{M}_a$ if and only if $f\in \mathcal{F}_1$ and 
$$
|f(z) \e^{-z/2}| \le C \e^{a |z|^{1/2}}, \qquad z\in \co.
$$
Similar classes of functions appeared in the paper of Gurarii \cite{gur}
in connection to the study of primary ideals in weighted $L^1$ spaces
(Beurling algebras). }                         
\end{example}

Let us show that for any $\alpha>1$ the subspaces $\mathcal{M}_a$ in
$\fa^p$ are nontrivial. 

\begin{lemma} For any $\alpha\geq1$ and $a>0$ there exists entire function
$f\in\mathcal{F}^p_\alpha$ such that 
\begin{equation}
\Big| f(z)\e^{-\frac{z^{\alpha}}{p}}\Big| \leq  \e^{a |z|^{\alpha\slash2}}, \qquad
\arg z\in[0,2\pi\slash\alpha].
\label{efeq}
\end{equation}
\end{lemma}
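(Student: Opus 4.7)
I would construct $f$ explicitly via an integral representation and verify the required properties by saddle-point analysis. For $\alpha>1$ set $\beta=\alpha/(\alpha-1)$, choose $c=c(\alpha,p)>0$ so that the unique critical point $t_\ast(z)=(z/(c\beta))^{\alpha-1}$ of the phase $\phi(t)=zt-ct^\beta$ has critical value $\phi(t_\ast(z))=z^\alpha/p$ (a short computation gives $c=\tfrac{\alpha-1}{\alpha}(p/\alpha)^{1/(\alpha-1)}$), and define
\[
f(z):=\int_0^\infty \frac{\e^{zt-ct^\beta}}{(1+t)^k}\,dt,
\]
where the integer $k$ will be fixed below. The integrand has super-polynomial decay in $t$, so $f$ is well-defined on $\co$ and entire by standard differentiation under the integral.

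\textbf{Bound in $\Omega$.} For $z=r\e^{\imag\theta}$ with $\theta\in(0,2\pi/\alpha)$, the saddle $t_\ast(z)$ has argument $(\alpha-1)\theta\in(0,2\pi(\alpha-1)/\alpha)$, and the contour $[0,\infty)$ can be deformed through $t_\ast(z)$ along the line of steepest descent without encountering any singularity. The standard saddle-point evaluation yields
\[
f(z)\;\asymp\;(1+|t_\ast(z)|)^{-k}\,r^{(\alpha-2)/2}\,\e^{z^\alpha/p}\;\asymp\;r^{-k(\alpha-1)+(\alpha-2)/2}\,\e^{z^\alpha/p},\qquad r\to\infty,\ z\in \Omega.
\]
The prefactor is a polynomial in $r$ and is therefore $\leq\e^{ar^{\alpha/2}}$ for any $a>0$ and every sufficiently large $r$, which gives \eqref{efeq} on a neighbourhood of infinity in $\Omega$; by continuity, rescaling $f$ by a small positive constant covers the bounded part of $\Omega$.

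\textbf{Membership in $\fa^p$.} Outside $\Omega$ the saddle lies off the contour, and repeated integration by parts gives $f(z)=\Ordo(|z|^{-N})$ for every $N$, so $|f|^p\e^{-|z|^\alpha}$ is trivially integrable there. Inside $\Omega$ the saddle-point bound yields
\[
|f(z)|^p\,\e^{-|z|^\alpha}\;\lesssim\;r^{p[-k(\alpha-1)+(\alpha-2)/2]}\,\e^{-r^\alpha(1-\cos\alpha\theta)},
\]
and a polar-coordinate estimate (substituting $u=r^{\alpha/2}\theta$ near the boundary rays $\theta=0,2\pi/\alpha$, where $1-\cos\alpha\theta$ degenerates) reduces convergence of the integral to the condition $k>\bigl[p(\alpha-2)+4-\alpha\bigr]/\bigl[2p(\alpha-1)\bigr]$, which is achievable by taking $k$ large. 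The case $\alpha=1$ is separate and simpler: $\Omega=\co$ and one may take, for example, a Bessel- or cosine-type entire function of exponential type $1/p$ with polynomial decay in the imaginary direction.

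\textbf{Main obstacle.} The principal technical point is to carry out the contour deformation and steepest-descent evaluation rigorously and uniformly in $\theta$ across the open sector, especially in neighbourhoods of the boundary rays $\arg z=0$ and $\arg z=2\pi/\alpha$, where the saddle $t_\ast(z)$ approaches the original contour $[0,\infty)$ and the standard non-degeneracy hypothesis of steepest descent breaks down. Once the asymptotic $f(z)\sim r^{-k(\alpha-1)+(\alpha-2)/2}\,\e^{z^\alpha/p}$ is established uniformly on $\Omega$, both conclusions (the sector bound \eqref{efeq} and $\fa^p$-integrability) follow by fixing $k$ large enough.
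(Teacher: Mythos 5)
The decisive step in your argument is false: the saddle-point asymptotics $f(z)\asymp (1+|t_\ast(z)|)^{-k}r^{(\alpha-2)/2}\e^{z^\alpha/p}$ cannot hold on all of $\Omega=\{0<\arg z<2\pi/\alpha\}$. Your contour joins $t=0$ to $t=+\infty$, and the endpoint $t=0$ contributes an algebraic series (Watson's lemma, after rotating the ray of integration) with leading term $-g(0)/z$, where $g(t)=(1+t)^{-k}\e^{-ct^\beta}$ and $g(0)=1$. The saddle contribution has modulus $\asymp \mathrm{poly}(r)\,\e^{\re(z^\alpha)/p}$, so it dominates only where $\re z^\alpha>0$, i.e.\ roughly for $|\arg z|<\pi/(2\alpha)$; on the subsector $\pi/(2\alpha)<\arg z<3\pi/(2\alpha)$, which lies \emph{inside} $\Omega$, the saddle term is exponentially small and $|f(z)|\asymp |z|^{-1}$. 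This is exactly the Mittag--Leffler dichotomy: a Laplace integral of your type is exponentially large in a sector of opening $\pi/\alpha$ and only algebraically small elsewhere, whereas the lemma demands control on a sector of opening $2\pi/\alpha$, twice as wide. On the ray $\arg z=\pi/\alpha$ one has $|\e^{-z^\alpha/p}|=\e^{r^\alpha/p}$, hence $|f(z)\e^{-z^\alpha/p}|\gtrsim r^{-1}\e^{r^\alpha/p}$, and \eqref{efeq} fails badly. The estimate you need forces $f$ to be genuinely exponentially small, of size about $\e^{\re(z^\alpha)/p}$, on the part of $\Omega$ where $\re z^\alpha<0$, and no integral of your form can achieve this: the endpoint $t=0$ pins $|f(z)|$ at size $\gtrsim |z|^{-1}$ in those directions (replacing the density by $t^m(1+t)^{-k}$ only improves this to $|z|^{-m-1}$). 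Your side claim that $f=\Ordo(|z|^{-N})$ for every $N$ outside $\Omega$ is wrong for the same reason, though harmless there; and the real obstacle is not, as you suggest, uniformity near the boundary rays $\arg z=0,\,2\pi/\alpha$ (where the saddle does dominate and your bound is fine), but the interior of the sector.

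This is precisely why the paper's proof takes a different route. For integer $\alpha=m$ an explicit function works, $f=\e^{z^m/p}\,\sin(az^{m/2})\,z^{-m/2}\,P(z)^{-1}$, whose modulus is $\lesssim \e^{\re(z^m)/p+a|z|^{m/2}}$ in the whole plane. For non-integer $\alpha$ the paper builds the subharmonic function $u(r\e^{\imag\theta})=\frac1p r^\alpha h(\theta)$, equal to $\frac1p\re z^\alpha$ on (a slight enlargement of) $\Omega$ and constant in $\theta$ elsewhere, and invokes Yulmukhametov's atomization theorem to obtain an entire $F$ with $\log|F|=u+\Ordo(\log|z|)$ off a small exceptional set; dividing by a polynomial gives $f_1\in\fa^p$ with $|f_1\e^{-z^\alpha/p}|\le 1$ on $\Omega$, and a second factor $f_2$, built from a Cauchy integral of $\sin(z^{\alpha/2})z^{-\alpha/2}$ over $\partial\Omega$, supplies the growth $\e^{a|z|^{\alpha/2}}$. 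Any repair of your approach must still produce an entire function whose modulus tracks $\e^{\re(z^\alpha)/p}$ -- including its exponential smallness -- throughout a sector of opening $2\pi/\alpha$; that is the actual content of the lemma and is what the proposed integral representation misses.
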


\begin{proof}
If $\alpha = m \in \mathbb{N}$ and $a>0$, we can put
$$
f(z) = \e^{\frac{1}{p}z^m}\, \frac{\sin (a z^{m/2})}{z^{m/2}} (P(z))^{-1}
$$ 
for a polynomial $P$ whose zeros are a subset of the zero set of
$\sin (a z^{m/2})$. Then 
$f\in \mathcal{F}^p_m$ and
$$
\Big| f(z) \e^{-\frac{1}{2} z^m} \Big| 
\le C \e^{a|z|^{m/2}}, \qquad z\in\mathbb{C}.
$$

In the case when $\alpha$ is noninteger, the construction is not so explicit.
It is based on a subtle atomization theorem
of R. S. Yulmukhametov \cite{Yul}. Let $\Omega=\{z:\  0 <\arg z < 2\pi/\alpha\}$.
First we consider the case $a=0$ and construct
a nonzero function $f_1\in \mathcal{F}^p_\alpha$ such that 
\begin{equation}
|f_1(z) \e^{-\frac{z^{\alpha}}{p}}|\leq 1, \qquad z\in \Omega.
\label{eqef2}
\end{equation}
Fix a small number $\varepsilon \in \big(0, \pi\big(1-\frac{1}{\alpha}\big) \big)$
and put 
$$
h(\theta)=
\begin{cases} \cos \alpha\theta,
& \ \theta\in[0,2\pi\slash\alpha +\varepsilon]\cup[2\pi-\varepsilon,2\pi],
\\
\cos \alpha\varepsilon, & \ \theta\in [2\pi\slash\alpha +\varepsilon,
2\pi-\varepsilon ],
\end{cases}
$$
and 
$$
u(z)=\frac{1}{p}r^\alpha h(\theta).
$$
It is easy to see that $u$ is a subharmonic function in $\mathbb{C}$ (it is
harmonic in $\Omega$ and subharmonic in $\mathbb{C}\setminus \overline{\Omega}$). 
Next, we are going to use the following approximation result of \cite{Yul}.

\begin{theorem}
\label{yulmukh}
Let $u$ be a subharmonic function in the complex plane of finite order
$\rho$. Then there exists an entire function $F$
such that for every $\beta\ge \rho$  
$$
|\log|F(z)|-u(z)|\le  C_\beta\log|z|,\qquad z\in \mathbb{C}\setminus E_\beta,
$$
for some $C_\beta>0$, and the set $E_\beta$ can be covered by a family of
discs $D(z_j, r_j)$ such that
$$
\sum_{|z_j|>R} r_j=\ordo(R^{\rho-\beta}),\qquad R\rightarrow+\infty.
$$
\end{theorem}

Let $F$ be an entire function constructed from $u$ via this theorem
(here $\rho=\alpha$ and we can find a sufficiently large $\beta> \alpha$
to ensure that $\e^u$ is almost constant inside the exceptional discs
$D(z_j,r_j)$). Then it is easy to see that $f_1 = F\slash P$ is in
$\fa^p$ and satisfies inequality \eqref{eqef2} for some polynomial $P$.
The estimate remains valid inside the discs $D(z_j,r_j)$ by the maximum principle.

To construct a function in $\mathcal{M}_a$, $a>0$, but not in
$\mathcal{M}_{a'}$ for $a'<a$, we construct an entire function $f_2$
such that, for $z= r \e^{i\theta}$,
\begin{equation}
|f_2(z)| \leq 
\begin{cases} \e^{r^{\alpha\slash2}\sin(\alpha\theta\slash2)}, &
\theta\in[0,2\pi\slash\alpha], \\
1,  & \theta\notin[0,2\pi\slash\alpha],
\end{cases}
\label{eqef3}
\end{equation}
Then the function $f(z):=f_1(z)f_2(z)$ satisfies \eqref{efeq}, but not a
similar estimate with $a'<a$.

The function 
$$
g(z)=\frac{\sin (z^{\alpha\slash2})}{z^{\alpha\slash2}}
$$
is analytic in the sector $\Omega = \{z:\  0 <\arg z < 2\pi/\alpha\}$ and
bounded in $\partial\Omega$.
Put 
$$
g_1(z)=\frac{1}{2\pi i}\int_{\partial\Omega}\frac{g(w)}{z-w}dw,
\qquad z\in \mathbb{C} \setminus \overline{\Omega}.
$$
It is well known that $g_1$ extends to an entire function. Indeed, $g_1$ is
analytic in $\mathbb{C}\setminus\overline{\Omega}$. Let
$\Omega_R=\Omega \cap \{z:|z|>R\}$ and
$g_R(z)=\frac{1}{2\pi i}\int_{\partial\Omega_R}\frac{g(w)}{z-w}dw$,  $z\in 
\mathbb{C} \setminus \overline{\Omega}_R$.
Then the function $g_R$ is an analytic continuation of $g_1$ to $\mathbb{C}\setminus 
\overline{\Omega}_R$ and, thus, 
$g_1$ extends to an entire function. By the Sokhotski–Plemelj theorem we get
$$
|g_1(z)|\leq \e^{r^{\alpha\slash2}\sin(\alpha\theta\slash2)}+ \Ordo(r),
\quad \theta\in[0,2\pi].
$$
Using standard estimates of Cauchy transform we get $|g_1(z)|\leq 10 (1+|z|)$,
$z\not\in\Omega$.
Hence, function $f_2(z)=\frac{g_1(z)}{z-z_0}$ (where $z_0$ is some zero of $g_1$)
satisfies \eqref{eqef3}.
\end{proof}
\bigskip


\section{Counterexamples constructed via generating functions}
\label{count}

In this section we discuss an additional, quite  general method to construct
nontrivial nearly invariant subspaces. 

Let $\fw$ be a Fock-type space (not necessarily radial). In this section we
consider only Hilbertian case, though all arguments easily carry over to
$L^p$-setting. We denote by $k_z$ the reproducing kernel of $\fw$
at the point $z$.
In what follows we will assume that our weight has some mild regularity,  more
precisely,  there exists $N\ge 0$ 
such that
\begin{equation}
\label{bab1}
C_1 |z|^{-N} \le \frac{W(\zeta)}{W(z)} \le C_2 |z|^{N}, \qquad \zeta \in
D(z, |z|^{-N}), \ |z|>1.
\end{equation}

Now assume that there exists an entire function $G\in \fw$ such that
the zeros $\{t_n\}$ of $G$ are simple and power separated, i.e., 
\begin{equation}
\label{pow} 
{\rm dist}(t_n, \{t_m\}_{m\ne n}) \gtrsim (|t_n|+1)^{-N_1},
\end{equation}
and also its derivative at the zeros has almost maximal growth (again up to a
power factor):
\begin{equation}
\label{pow1} 
\|k_{t_n}\| \le (|t_n|+1)^{N_2}|G'(t_n)|.
\end{equation}
Here $N_1, N_2$ are some positive constants. 

All these assumptions are standard and natural. For many Fock-type spaces
(including standard spaces $\fa$, $\alpha>0$) one can choose the function
$G$ which approximates nicely 
the weight: there exists $N>0$ such that
\begin{equation}
\label{gen1} 
(|z|+1)^{-N} W^{-1/2} (z){\rm dist} (z, \zz_G) \lesssim |G(z)| \lesssim
(|z|+1)^{N} W^{-1/2}(z){\rm dist} (z, \zz_G) 
\end{equation}
for all $z\in\co$. 
Since, under condition \eqref{bab1} one has $\|k_z\| \lesssim (|z|+1)^N W(z)$
(for some, probably different, $N>0$), \eqref{pow1} follows from \eqref{gen1}.

In the classical Fock space (with $\varphi(z) = \pi |z|^2$) the corresponding
function $G$ is the 
the Weierstrass function 
$$
\sigma(z) = z\prod_{\lambda \in (\zl+i\zl)\setminus\{0\}} \bigg(1-\frac{z}
{\lambda}\bigg)e^{\frac{z}{\lambda} +\frac{z^2}{2\lambda^2}}.
$$
In general, for a subharmonic weight $W$ the function $G$ can be obtained
by atomization of $\Delta W$
(see, e.g., \cite{Yul} or \cite[Proposition 8.1]{bdk}).

\begin{theorem}
\label{const}
Let $\fw$ and $G$ satisfy conditions \eqref{bab1}, \eqref{pow} and \eqref{pow1}.
Assume that there exists an entire function $T$ such that $\zz_{T} \subset \zz_G$, 
both $\zz_{T}$ and $\zz_G\setminus \zz_{T}$ are infinite, and the following holds:
\begin{equation}
\label{t1} 
|T(z)| \gtrsim (|z|+1)^{-N}, \qquad z \notin \cup_n D_n, 
\end{equation}
where $D_n = D(t_n, (|t_n|+1)^{-N}/10)$, and
\begin{equation}
\label{t2} 
|T(t_n)| \lesssim 1, \qquad t_n\in \zz_G\setminus\zz_{T}.
\end{equation}
Then $\fw$ contains a nontrivial infinite-dimensional subspace of
the form $\mathcal{M}_g$ for some $g\in \fw$.
\end{theorem}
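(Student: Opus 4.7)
The natural candidate is $g := G/T$. Under the working assumption that $T$ has only simple zeros on $\zz_T$ (which is needed for $g$ to be entire; otherwise one replaces $T$ by a suitable entire function with the same qualitative behaviour), the inclusion $\zz_T \subset \zz_G$ and the simplicity of the zeros of $G$ imply that $g$ is an entire function whose zero set is exactly $\zz_g = \zz_G \setminus \zz_T$, with all zeros simple. By hypothesis this zero set is infinite, so the family $\{g(z)/(z-s)\}_{s\in \zz_g}$ consists of infinitely many linearly independent entire functions (their zero sets differ pairwise), and hence, once $g\in\fw$ is known, $\mathcal{M}_g$ will automatically be a nontrivial infinite-dimensional nearly invariant subspace.

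The main task is therefore to show that
\[
\int_{\co} \frac{|G(z)|^2}{|T(z)|^2}\, W(z)\, dm_2(z) < +\infty.
\]
I would split the plane as $\Omega := \co \setminus \bigcup_n D_n$ together with the discs $D_n$, and handle the three regimes separately. On $\Omega$, \eqref{t1} yields $|T(z)|^{-2} \lesssim (|z|+1)^{2N}$, so the integrand is dominated by $(|z|+1)^{2N}|G(z)|^2 W(z)$; the two-sided pointwise estimate \eqref{gen1} for $G$, together with the weight regularity \eqref{bab1} and a mean-value argument on discs of radius $\asymp (|z|+1)^{-N}$, reduces this to $(|z|+1)^{c}\,\dist(z,\zz_G)^2$, and since $\Omega$ avoids small neighbourhoods of $\zz_G$ this contribution is finite (if polynomial factors survive, one can replace $g$ by $g/Q$ for a fixed polynomial $Q$ whose zeros are disjoint from $\zz_G$, which preserves $\zz_g$ and hence the infinite-dimensionality).

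On each $D_n$ with $t_n\in\zz_T$, both $G$ and $T$ have simple zeros at $t_n$, so $g$ extends holomorphically with $g(t_n) = G'(t_n)/T'(t_n)$; a Cauchy estimate for $T'$ on a slightly larger circle (where \eqref{t1} still applies) gives a polynomial lower bound for $|T'(t_n)|$, while \eqref{pow1} and the standard reproducing-kernel bound controlled by \eqref{bab1} give an upper bound for $|G'(t_n)|\,W^{1/2}(t_n)$, producing a summable contribution. On each $D_n$ with $t_n\notin\zz_T$, the power separation \eqref{pow} forces $T$ to be zero-free on a slightly enlarged disc; applying Harnack's inequality to $\log|T|$ on that disc and combining the upper bound \eqref{t2} at $t_n$ with the lower bound \eqref{t1} on the enlarged boundary, one obtains a polynomial lower bound for $|T|$ on $D_n$, from which the local contribution is bounded by the same mean-value argument. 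The main obstacle is the meticulous bookkeeping of the polynomial factors generated by \eqref{bab1}, \eqref{pow}, \eqref{pow1}, \eqref{t1}, and \eqref{t2}; once they are controlled and the three regimes are summed, $g\in\fw$, and the theorem follows.
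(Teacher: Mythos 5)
There is a genuine gap, and it is precisely at the heart of the theorem: you never prove that $\MM_g$ is a \emph{proper} subspace of $\fw$. In this paper ``nontrivial'' means $\MM_g\ne\{0\}$ and $\MM_g\ne\fw$ (otherwise the whole space would already witness Theorem \ref{main4}), so the real content is that the family $\{g/(z-\lambda):\lambda\in\zz_g\}$ is \emph{not complete} in $\fw$. Linear independence of infinitely many functions gives infinite dimension of the closed span, but says nothing about its properness; indeed Corollary \ref{cor2} shows that for spaces of slow growth such families are always complete, so properness cannot be ``automatic'' and must use the hypotheses \eqref{t1}--\eqref{t2} in an essential way. The paper's proof is organized around exactly this point: after dividing $T$ by a polynomial to get the summability \eqref{sho}, it perturbs the zeros $t_n\in\zz_G\setminus\zz_T$ to $\lambda_n=t_n+\delta_n$ with $\delta_n\ne0$, builds $G_0$ with $|G_0T|\asymp|G|$ off the discs $D_n$, sets $g=G_0/P$, and then exhibits an explicit nonzero annihilator $f=\sum_n\overline{b_n}k_{t_n}$ with $b_n$ proportional to $T(t_n)/G'(t_n)$, the orthogonality being verified through a residue/Cauchy-series identity and Liouville-type argument. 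Your unperturbed candidate $g=G/T$ makes this mechanism degenerate: $g(t_n)=0$ for every $t_n\in\zz_G\setminus\zz_T$, so the natural functionals built on the kernels $k_{t_n}$ annihilate $\MM_g$ only if they vanish identically, and you offer no substitute argument that $\MM_{G/T}$ is not all of $\fw$. So even granting $g\in\fw$, the conclusion of the theorem is not reached.

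There are also smaller problems in the membership argument. The two-sided estimate \eqref{gen1} is not a hypothesis of Theorem \ref{const} (the assumptions are only \eqref{bab1}, \eqref{pow}, \eqref{pow1}, \eqref{t1}, \eqref{t2}), so it cannot be invoked on $\Omega$; the paper instead gets $|g|\lesssim(1+|z|)^N|G|/|P|$ off the discs and uses $G\in\fw$ together with \eqref{bab1} and division by a polynomial $P$ with zeros \emph{in} the zero set of the generator. Your fix ``replace $g$ by $g/Q$ with $\zz_Q$ disjoint from $\zz_G$'' produces a function with poles, hence not in $\fw$; the polynomial must have zeros among $\zz_g$. Finally, \eqref{pow1} is a \emph{lower} bound on $|G'(t_n)|$, so it does not give the upper bound on $|G'(t_n)|W^{1/2}(t_n)$ you use on the discs around $t_n\in\zz_T$ (such a bound should instead come from $G\in\fw$, subharmonicity and \eqref{bab1}), and the lower bound for $|T'(t_n)|$ is obtained from the minimum modulus of the zero-free factor rather than a Cauchy estimate. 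These are repairable, but the missing proof of properness is not a technicality: it is the main assertion, and it is where the perturbation-of-zeros construction in the paper does its work.
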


\begin{proof}
We may assume that the constants $N$, $N_1$, $N_2$ in 
\eqref{bab1}--\eqref{t2} are the same. 
Dividing $T$ by a polynomial of sufficiently large degree with zeros in
$\zz_{T}$ we may assume, 
without loss of generality, that
\begin{equation}
\label{sho} 
\sum_{n}\frac{|T(t_n)|}{|G'(t_n)|}\|k_{t_n}\| <+\infty.
\end{equation}
Next we construct an entire function $G_0$ whose zeros $\lambda_n = t_n+\delta_n$, 
$t_n\in \zz_G\setminus \zz_{T}$, are very small but {\it nonzero} perturbations
of zeros of $G/T$ so that
\begin{equation}
\label{as} 
|G_0(z)T(z)| \asymp |G(z)|,  \qquad z \notin \cup_n D_n.
\end{equation}
and 
\begin{equation}
\label{as1} 
\sum_n \Big|\frac{G_0(t_n) T(t_n)}{G'(t_n)}\Big|<+\infty
\end{equation}

Now let $P$ be a polynomial with $\zz_P\subset \zz_{G_0}$. Put
$g = \frac{G_0}{P}$. We claim that if the degree of $P$
is sufficiently large, then $g\in \fw$ and
$\mathcal{M}_g = \overline{{\rm Span}} \big\{ \frac{g(z)}{z-\lambda}: \
\lambda \in \zz_g \big\}$
is a nontrivial nearly invariant subspace.
We have
$$
|g(z)| \asymp \frac{|G(z)|}{|P(z)T(z)|}\lesssim
\frac{|z|^{N}G(z)}{|P(z)|}, \qquad z\notin \cup_n D_n.
$$
Thus, if ${\rm deg} P \ge N$, then $W^{-1/2}g \in L^2(\co \setminus \cup_n D_n)$.
Since the weight $W$ satisfies 
\eqref{bab1} and the discs $D_n$ are disjoint, we conclude that $g\in \fw$ if
${\rm deg} P$ is sufficiently large.

Now we construct a function which is orthogonal to $\MM_g$. Let
$b_n = T(t_n) \|k_{t_n}\|/G'(t_n)$ and let
$$
f = \sum_n \overline{b_n} k_{t_n}.
$$ 
In view of \eqref{sho} $f\in\fw$. Clearly, for $\lambda\in \zz_g$,
$$
\bigg( \frac{g(z)}{z-\lambda}, f \bigg)_{\fw} = \sum_n b_n
\frac{g(t_n)}{t_n-\lambda} =\sum_n \frac{g(t_n)T(t_n)}{G'(t_n)(t_n-\lambda)}
$$
(note that $\lambda\ne t_n$ by the construction of $G_0$). We claim that
$$
\sum_n \frac{g(t_n)T(t_n)}{G'(t_n)(z- t_n)} = \frac{g(z)T(z)}{G(z)}.
$$
Then, putting $z= \lambda$ yields that $\big(\frac{g(z)}{z-\lambda},f\big)_{\fw} =0$.

Let 
$$
H(z) = \sum_n \frac{g(t_n)T(t_n)}{G'(t_n)(z- t_n)} - \frac{g(z)T(z)}{G(z)}.
$$
Since the residues coincide, $H$ is an entire function. Note that 
$$
\bigg|\frac{g(z)T(z)}{G(z)}\bigg| = \bigg|\frac{G_0(z)T(z)}{G(z)}
\bigg|\cdot \frac{1}{|P(z)|}\lesssim \frac{1}{|P(z)|}, \qquad z\notin \cup_n D_n.
$$ 
Combining this with \eqref{as1} we conclude that $|H(z)|\lesssim 1+|z|^N$ and
also $\liminf_{|z|\to+\infty}|H(z)| =0$. Thus, $H\equiv 0$.
\end{proof}

Here are some concrete examples where  Theorem \ref{const} applies.

\begin{example}
{\rm All examples of Subsection \ref{barg1} can be obtained by the construction of Theorem \ref{const}.
E.g., let $W(z) = \exp(-\pi |z|^2)$ and $\MM = \e^{\pi z^2/2}PW_{[-\pi, \pi]}$. 
Let $G=\sigma$ be the Weierstrass function and
$T(z) = \frac{\sigma(z) e^{-\pi z^2/2}}{\sin \pi z}$.
Then the subspace constructed in Theorem \ref{const} starting from these $G$ and $T$ coincides with $\MM$.
In the case of $\MM =\e^{\pi z^2/2}PW_{[-a, a]}$ one should replace
$\sin \pi z$ by an appropriate product 
of sine-type functions with zeros in the lattice $\mathbb{Z} + i\mathbb{Z}$ intersected with a strip around $\rl$.}
\end{example}

\begin{example}
{\rm A different example in the classical Fock space $\mathcal{F}_2$ with
$W(z) = \exp(-\pi |z|^2)$  
can be obtained if we consider 
$$
T(z) = \prod_{k=1}^{+\infty} \bigg(1-\frac{\e^{2\pi iz}}{\e^{2\pi k}}\bigg).
$$
Then $T$ is an entire function with zeros $z_{m,k} = m-i k$, $m\in\mathbb{Z}$,
$k\in \mathbb{N}$, and it is easy to see that 
$T$ satisfies \eqref{t1}--\eqref{t2} with $G=\sigma$. This function was used in 
\cite{abb} to provide counterexamples to the  Ordering Theorem
for nearly invariant subspaces in Cauchy--de Branges spaces. 
The corresponding nearly invariant subspace coincides with the
subspace $\mathcal{M}_0$ from Proposition \ref{phl}. }
\end{example}

\begin{example}
{\rm Consider $\mathcal{F}_1$, i.e., let $W(z) = \exp(-|z|)$. Then, by an
atomization procedure one can find an
entire function $G$ satisfying \eqref{gen1} (see, e.g., \cite{Yul}
or \cite[Proposition 8.1]{bdk}). 
Moreover, it is easy to choose its zeros $t_n$ so that $\{t_n\} \supset 
\{ \beta^{-1} n^2 \}_{n\ge 1}$ for some $\beta>0$. Then we set
$$
T(z) = G(z)\e^{-z} \bigg( \frac{\sin \pi\beta \sqrt{z}}{\sqrt{z}}\bigg)^{-1}.
$$
Conditions \eqref{t1}--\eqref{t2} are clearly satisfied.}
\end{example}
\bigskip


\section{Rotation-invariant nearly invariant subspaces}
\label{rot}

Recall that these are nearly invariant subspaces of $\fw^p$ ($W$ radial),
which are also invariant under the isometry $R_\beta$,  defined by
$R_\beta f(z)=f(\e^{\imag\beta}z),~f\in \fw^p$. 
As we mentioned in the introduction, in the case when $\beta/\pi \notin \mathbb{Q}$,
such a subspace must have the form $\mathcal{P}_k$, for some integer $k\ge 0$.

To verify this, note that the symmetrizations defined for integers $k\ge 0$,
$N\ge 1$, by
\begin{equation}
\label{ser} 
R_{k,N}=\frac{1}{N} \sum_{j=0}^{N-1}\e^{-\imag jk\beta}R_\beta^j,
\end{equation}
are uniformly bounded on $\fw^p$, and if $f$ is a polynomial, then $R_{k,N}f$
converges in the norm of $\fw^p$ to the monomial 
$\frac{f^{(k)}(0)}{k!} z^k$ as $N\to+\infty$.
By Proposition \ref{den}, the polynomials are dense
in $\fw^p$, hence the above assertion holds true for every $f\in \fw^p$. Thus, if
$\MM\subset\fw^p$ is a closed nearly invariant subspace without common zeros,
then it contains the monomial $z^k$ whenever there exists $f\in\MM,~f^{(k)}(0)\ne 0$.
In particular, it contains the constants which makes $\MM$ backward shift
invariant. But then,  if $\MM$  contains the monomial 
$z^k$, it will also contain $z^l$, $0\le l\le k$. If $\MM$ contains all the
monomials, then $\MM=\fw^p$, given that the polynomials are dense. We conclude that
if $\MM$ is a nontrivial nealy invariant subspace, then there must exist $k_0$
such  that $f^{(k)}(0)=0$ whenever $f\in \MM$, $k\ge k_0$. From this, the
claim is immediate.

\begin{proof}[Proof of Theorem \ref{simetr}]
Throughout the proof we assume that $\beta = 2\pi /n$. Given an $R_\beta$-invariant 
subspace $\MM$ of $\fw^p$, we denote by $\MM^s$
the set of all functions $F$ in $\MM$ such that $R_\beta F = F$.  In other words,
with the notation in  \eqref{ser}, $\MM^s=R_{0,n}\MM$. 

Now let $\MM_1$ and $\MM_2$ be two nearly invariant subspaces invariant also
with respect to $R_\beta$. We will show that either $\MM_1^s \subset\MM_2^s$
or $\MM_2^s \subset\MM_1^s$. As in the proof of Theorem \ref{main3}, we fix
nonzero functions
$F_1, F_2 \in L^q(W)$ such that $\Psi_{F_1}$ annihilates $\MM^s_2$ but
not $\MM^s_1$, while $\Psi_{F_2} $ annihilates $\MM_1^s$ but
not $\MM_2^s$.
Note that, if $R_\beta F = F$, then  
$$
\begin{aligned}
\Psi_{F_1}(F) & = \frac{1}{n} \sum_{j=0}^{n-1} \int_{\co}
F(\e^{\imag j \beta} z) F_1(z) W(z) dm_2(z) \\
& =
\frac{1}{N} \int_{\co}  F(z)\sum_{j=0}^{N-1} F_1(\e^{-\imag j \beta} z)
W(z) dm_2(z) =\Psi_{F_1^s}(F).  
\end{aligned}
$$
Thus, we may assume without loss of generality that $R_\beta F_1 = F_1$,
$R_\beta F_2 = F_2$.

Define entire functions $f$ and $g$, by the formulas \eqref{ff} and \eqref{fg},
respectively. Then, for $\mu = W m_2$, 
$$
\begin{aligned}
f(\e^{\imag\beta} z) &= \int\frac{F(\zeta) -
\frac{F(\e^{\imag\beta}z)}{G(\e^{\imag\beta}z)}G(\zeta)}{\zeta-\e^{\imag\beta}z} 
F_1(\zeta) d\mu(\zeta) \\
& = \int\frac{F(\e^{\imag\beta}u) - \frac{F(\e^{\imag\beta}z)}{G(\e^{\imag\beta}z)}
G(\e^{\imag\beta}u)}{\e^{\imag\beta}u-\e^{\imag\beta}z} 
F_1(\e^{\imag\beta}u)  d\mu(u) =  \e^{-\imag\beta}f(z).  
\end{aligned}
$$
Thus, $\tilde f(z) = z f(z)$ satisfies $\tilde f(\e^{\imag\beta} z) =\tilde f(z)$,
whence $\tilde f (z) = f_1(z^n)$. 
Since all functions in $\fw^p$ are of zero type 
with respect to the order $n$, the same is true for the function $f$
(see \cite[Chapter 1, \S 9]{lev}). We 
conclude that $f_1$ is a function of zero exponential type. Analogously, 
$zg(z) = g_1(z^n)$, where $g_1$ is of zero exponential type. 

By \eqref{ct}, there is a set $\Omega$ of finite measure such that 
$$         
|f_1(z^n)| \lesssim |z|^3\bigg(1+ \bigg|\frac{F(z)}{G(z)}\bigg|\bigg), \qquad
|g_1(z^n)| \lesssim |z|^3 \bigg(1+ \bigg|\frac{G(z)}{F(z)}\bigg|\bigg),
\qquad |z|>1, z\notin \Omega.
$$
Hence,  $\min (|f_1(z)|, |g_1(z)|) \lesssim |z|^3$ outside $\Omega$ 
and, by the Heins Theorem \ref{hei}, 
either $f_1$ or $g_1$ is a polynomial. Repeating the arguments from the proof
of Theorem \ref{main3} we arrive at a contradiction. 

We have shown that the symmetrized parts of nearly invariant rotation
invariant subspaces are ordered by inclusion. In particular, for any
infinite-dimensional rotation invariant subspace $\MM$, its symmetrized part
$\MM^s$ must contain the collection of polynomials 
of the form $\sum_{k=0}^m p_k z^{kn}$, 
the symmetrized part of $\pp_{mn}$. Since $\MM$ is also nearly invariant, 
we conclude that $\MM$ contains all the polynomials.
If $\MM$ is instead finite-dimensional, there exists 
$m$ such that $\pp_{m}^s=\MM^s$ and so $\MM = \pp_k$ for some $k$.
\end{proof}

\end{document}